\definecolor{grau}{rgb}{0.65,0.65,0.65}
\definecolor{dblau}{rgb}{0,0,0.45}
\definecolor{blau}{rgb}{0,0,0.75} 
\definecolor{grun}{rgb}{0.1,0.6,0.1} 
\newcommand{\myt}[1]{\emph{\color{dblau}#1}}
\theoremstyle{plain}
\newtheorem{lem}{\normalfont\scshape Lemma}
\newtheorem{thm}{\normalfont\scshape Theorem}
\newtheorem{coroll}{\normalfont\scshape Corollary}
\newtheorem{prop}{Proposition}
\theoremstyle{definition}
\newtheorem{remark}{\normalfont\scshape Remark}
\newtheorem{example}{\normalfont\scshape Example}
\newtheorem{defi}{\normalfont\scshape Definition}
\def\ztt{\zeta^{t}}
\def\zt{\zeta}
\def\zts{\zeta^{\star}}
\newcommand{\Ha}[2]{\ensuremath{H_{#1}^{(#2)}}}
\renewcommand{\P}{\ensuremath{\mathbb{P}}}
\newcommand{\law}{\ensuremath{\stackrel{(d)}=}}
\newcommand{\claw}{\ensuremath{\xrightarrow{(d)}}}
\newcommand{\laenge}{\ensuremath{\mathcal{L}}}
\newcommand{\M}{\ensuremath{\mathcal{M}}}
\renewcommand{\S}{\ensuremath{\mathcal{R}}}
\newcommand{\N}{\ensuremath{\mathbb{N}}}
\newcommand{\E}{\ensuremath{\mathbb{E}}}
\newcommand{\V}{\ensuremath{\mathbb{V}}}
\newcommand{\fallfak}[2]{\ensuremath{#1^{\underline{#2}}}}
\newcommand{\stir}[2]{\genfrac{ [ }{ ] }{0pt}{}{#1}{#2}}
\newcommand{\Stir}[2]{\genfrac{ \{ }{ \} }{0pt}{}{#1}{#2}}
\DeclareMathOperator{\Exp}{\text{Exp}}
\DeclareMathOperator{\Hy}{\text{Hypergeo}}
\DeclareMathOperator{\Po}{\text{Poisson}}
\DeclareMathOperator{\Be}{\text{Be}}
\DeclareMathOperator{\Beta}{\text{B}}
\begin{document}
\title{On multisets, interpolated multiple zeta values and limit laws}%
\author{Markus Kuba}
\date{\today}

\begin{abstract}
In this work we discuss a parameter $\sigma$ on weighted $k$-element multisets of $[n]= \{1,\dots ,n\}$. 
The sums of weighted $k$-multisets are related to $k$-subsets, $k$-multisets, as well as special instances of truncated interpolated multiple zeta values.
We study properties of this parameter using symbolic combinatorics. We (re)derive and extend certain identities for $\ztt_n(\{m\}_k)$.
Moreover, we introduce random variables on the $k$-element multisets and derive their distributions, as well as limit laws for $k$ or $n$ tending to infinity. 
\end{abstract}

\keywords{$k$-multisets, $k$-subsets, truncated multiple zeta values, interpolated multiple zeta values, harmonic numbers, limit laws.}%
\subjclass[2010]{60C05, 11M32.} %

\maketitle


\section{Introduction}
The \myt{multiple zeta values} are defined as
\[
\zt(i_1,\dots,i_k)=\sum_{\ell_1>\cdots>\ell_k\ge 1}\frac1{\ell_1^{i_1}\cdots \ell_k^{i_k}},
\]
with admissible indices $(i_1,\dots,i_k)$ satisfying $i_1\ge 2$, $i_j\ge 1$ for $2\le j\le k$, see Hoffman~\cite{H92} or Zagier~\cite{Z}. 
Their \myt{truncated} counterpart, sometimes called \myt{multiple harmonic sum}, is given by
\[
\zt_n(i_1,\dots,i_k)=\sum_{n\ge \ell_1>\cdots>\ell_k\ge 1}\frac1{\ell_1^{i_1}\cdots \ell_k^{i_k}}.
\]
We refer to $i_1 +\dots + i_k$ as the weight of this multiple zeta value, and $k$ as its depth. 
For a comprehensive overview as well as a great many pointers to the literature we refer to the survey of Zudilin~\cite{Zu}.
An important variant of the (truncated) multiple zeta values are the so-called multiple zeta \myt{star values}, where equality is allowed:
\[
\begin{split}
  \zts(i_1,\dots,i_k) & = \sum_{\ell_1\ge \cdots\ge \ell_k\ge 1}\frac1{\ell_1^{i_1}\cdots \ell_k^{i_k}} 
	,
\end{split}
\]
their truncated counterpart $\zts_{n}(i_1,\dots,i_k)$ is defined analogous. On the combinatorial side, it is well known that two special cases,
namely the series $\zt_{n}(\{1\}_k)$ and $\zts_{n}(\{1\}_k)$ occur in a multitude of different places. 
See for example~\cite{KP} and the references therein for different representation of the two series.
The value $\zt_{n}(\{1\}_k)$ is closely related to the Stirling numbers of the first kind, and $\zts_{n}(\{1\}_k)$ is related to alternating binomial sums.
Here and throughout this work $\{m\}_{k}$ means $m$ repeated $k$ times. 
For non-truncated series $\zt$ and $\zts$, Yamamoto~\cite{Y} introduced a generalization of both versions called \myt{interpolated multiple zeta values}. Noting that, 
\[
\zts(i_1,\dots,i_k)=\sum_{\circ = \text{``},\text{''} \text{or} \, \text{``}+\text{''}}\zt(i_1\circ i_2 \dots \circ i_k),
\]
let the parameter $\sigma$ denote the number of plus in the expression $i_1\circ i_2 \dots \circ i_k$. Yamamoto defines
\begin{equation}
\label{EqYamamoto}
  \ztt(i_1,\dots,i_k) = \sum_{\circ = \text{``},\text{''} \text{or} \, \text{``}+\text{''}}t^{\sigma}\zt(i_1\circ i_2 \dots \circ i_k).
\end{equation}
Thus, the series $\ztt(i_1,\dots,i_k)$ interpolates between multiple zeta values, case $t=0$, and multiple zeta star values, case $t=1$.
It turned out that the interpolated series satisfies many identities generalizing or unifying earlier result for multiple zeta and zeta star values, see for example Yamamoto~\cite{Y},
Hoffman and Ihara~\cite{HI} or Hoffman~\cite{H2000,H2018}. In particular, a so-called quasi-shuffle product~\cite{H2000,H2018,HI,Y}, sometimes also called stuffle product or harmonic shuffle product, can be defined for the interpolated zeta values. Hoffman and Ihara used an algebra framework, which leads amongst others to expressions for interpolated multiple zeta values
 $\ztt(\{m\}_{k})$ in terms of Bell polynomials and ordinary single argument zeta values, $m\ge 1$. We discuss and (re)derive in this work the results for $\ztt_{n}(\{m\}_{k})$ and $\ztt(\{m\}_{k})$ using symbolic combinatorics.  Moreover, we relate and extend certain multiple harmonic sums identities~\cite{Prode2010,ProdeTau2010} to $\ztt_{n}(\{m\}_{k})$.

\smallskip

We study in particular the interpolated truncated multiple zeta values: we are interested in the truncated series 
\begin{align}
\label{EqYamamotoOrdered}
\ztt_{n}(\{1\}_{k}) & =\sum_{\circ = \text{``},\text{''} \text{or} \, \text{``}+\text{''}}t^{\sigma}\zt_{n}(\underbrace{1 \circ 1 \circ \dots \circ 1}_{k}) 
\end{align}
and their properties. Note that $\ztt_{n}(\{1\}_{k})$ interpolates between $\zt_n(\{1\}_k)$ and $\zts_n(\{1\}_k)$, $t=0$ and $t=1$, respectively. 

\smallskip

On the other hand, classical combinatorial problems are the enumeration of $k$-element multisets, short \myt{$k$-multisets}, of $[n]=\{1,\dots,n\}$, leading to the multiset coefficients, $\binom{n+k-1}{k}$ and the enumeration of $k$-element sets, short \myt{$k$-subsets}, of $[n]=\{1,\dots,n\}$ leading to the binomial coefficients, $\binom{n}{k}$. 

\smallskip 

The goal of this work is to unify these two topics, namely interpolated (truncated) multiple zeta values and the enumeration of $k$-subsets and $k$-multisets. We study $k$-\myt{multisets} of $[n]$ and introduce a \myt{weighted enumeration} of $k$-multisets of $[n]$ in terms of a given weight sequence $\boldsymbol{a}=(a_j)_{j\in\N}$. Our weighted enumeration also takes into account an additional parameter $\sigma=\sigma_{n,k}$. It is defined similarly to Yamamoto~\cite{Y}: $\sigma$ counts the number of equalities in the nested sum representation, corresponding to the number of plus signs in~\eqref{EqYamamoto} or~\eqref{EqYamamotoOrdered}. The weighted enumeration also has an \myt{algebraic interpretation}, connecting elementary symmetric functions $e_k$ and complete symmetric functions $h_k$; moreover power sum $p_ks$ also appear in an alternative representation.

\smallskip

In the next section we introduce our weighted enumeration of $k$-multisets, given an arbitrary sequence $(a_j)_{j\in \N}$ of positive reals, $a_j>0$. Closely related ideas have been considered by Vignat and Takhare~\cite{VW}, who considered (non-truncated) generalized multiple zeta values and infinity products, with $(a_j)_{j\in \N}$ given by the zeros of certain special functions. 
Moreover, Bachmann very briefly discussed related ideas at the end of his article~\cite{B}. Also related are the partition zeta functions
introduced by Schneider~\cite{Schneider}.

\smallskip

We study two aspects of the weighted enumerations: first, its relation to $k$-sets and $k$-multisets and to the interpolated (truncated) multiple zeta function,
leading to our main result in Theorems~\ref{PropGF}, and second, the probabilistic aspects of $\sigma$. 
While the algebraic and number theoretic aspects of interpolated multiple zeta values have been studied in the literature,
no studies so far have been conducted on its stochastic properties concerning structures
of large size and its distribution, to the best of the author's knowledge. Questions such as "what is the
expected value of the parameter $\sigma$" or "what is the limit law of $\sigma$" have not been considered before.

Introducing suitably defined random variables $S_{n,k}$, we study the distribution of $\sigma$, as well as limit laws for $k$ or $n$ tending to infinity.
In particular, we show that the parameter $\sigma$ in $k$-multisets follows a hypergeometric distribution and derive Poisson and Gaussian limit laws.
For $\zts_n(\{1\}_k)$ we show that also different limit laws occur. In a later section we introduce refinements of the parameter $\sigma$ an also discuss related random variables. 
It turns out that the random variables $S_{n,k}$, its refinements and its limit laws are closely related to many quantities studied earlier in combinatorial probability theory:
maxima in hypercubes~\cite{Bai,Hwang2004}, unsuccessfull search in binary search tree~\cite{DobSmy1996}, descendants in increasing trees~\cite{KP2006}, edge-weighted increasing trees~\cite{KP2008,KW}, distances in increasing trees~\cite{Dob1996,DobSmy1996}, leaf-isolation procedures in random trees~\cite{KP2008b}, as well as asymptotics of the Poisson distribution~\cite{DobSmy1996,Hwang1999}.

\smallskip 

\smallskip

Throughout this work use the notation $H_n=\zt_n(1)$ for the $n$th harmonic number, $H_n^{(j)}=\zt_n(j)$ for the $n$th generalized harmonic number of order $j$. We denote with $\mathcal{N}(0,1)$ a standard normal distributed random variable, 
with $\Po(\lambda)$ a Poisson distributed random variable with parameter $\lambda$, and with $\Hy(N,K,n)$ a hypergeometric distributed random variable with parameters $N,K,n$. For the reader's convenience a few basic facts about the probability distributions, appearing later in our analysis, are collected in the appendix of this work.
We denote with $\fallfak{x}{k}$ the $k$th falling factorial, $\fallfak{x}{k}=x(x-1)\dots(x-(k-1))$, $k\in\N$, with $\fallfak{x}0=1$. 
Furthermore, we use the abbreviation $E_x$ for the evaluation operator at $x=1$. Moreover, we denote by $X \law Y$ the equality in distribution of the random variables $X$ and $Y$, and by $X_{n} \claw X$ convergence in distribution of the sequence of random variables $X_{n}$ to a random variable $X$. We denote with $\mathcal{P}(k)$ the set of ordered partitions of the integer $k$, 
often also called compositions of the integer $k$, and $\laenge(\mathbf{p})$ the length of a partition $\mathbf{p}$, defined as the number of its summands. E.g., for $k=3$ we have
$\mathcal{P}_O(3)=\{(1,1,1), (2,1), (1,2),(3)\}$ and 
\[
\mathcal{P}_O(4)=\{(1,1,1,1), (1,1,2), (1,2,1),(2,1,1),(1,3),(3,1),(4)\}.
\]
Concerning length of partitions, we have $\laenge((1,1,1,1))=4$ and $\laenge((4))=1$.

\section{Interpolated weighted Multisets and parameter sigma}
\subsection{Parameter sigma}
We consider $k$-multisets of $[n]= \{1,\dots ,n\}$. Strictly speaking, we do not directly consider multisets. Instead, we induce an increasing order on the distinct elements in the multisets and consider the ordered sequences. For example, the unordered multiset $\{1,1,4,2,2,6,2\}$ corresponds to the increasing sequence $(1,1,2,2,2,4,6)$. 
Let $\M_{n,k}$ denote the set 
\begin{equation}
\M_{n,k}=\{\vec{\ell}\in\N^k\colon 1\le \ell_k\le \dots \le \ell_2\le \ell_1 \le n\},
\label{eq:Mnk}
\end{equation}
such that each $\vec{\ell}$ corresponds to a $k$-multiset of $[n]$. In a slight abuse of notation, we denote with $| \vec{\ell} |=k$ the length of $\vec{\ell}$, in other words the cardinality of the corresponding multiset. The cardinality of $\M_{n,k}$ is given by the multiset coefficients, counting the number of $k$-multisets of $[n]$: 
\[
|\M_{n,k}|=\sum_{\ell_1=1}^n \sum_{\ell_2=1}^{\ell_1}\dots \sum_{\ell_k=1}^{\ell_{k-1}}1=\binom{n+k-1}{k}.
\]
Closely related is the set 
\begin{equation}
\S_{n,k}=\{\vec{\ell}\in\N^k\colon 1\le \ell_k< \dots < \ell_2< \ell_1 \le n\},
\label{eq:Rnk}
\end{equation}
whose cardinality is given by binomial coefficients, counting the number of $k$-sets of $[n]$: 
\[
|\S_{n,k}|=\sum_{\ell_1=1}^n \sum_{\ell_2=1}^{\ell_1-1}\dots \sum_{\ell_k=1}^{\ell_{k-1}-1}1=\binom{n}{k}.
\]

\smallskip

In the following we introduce the parameter $\sigma=\sigma_{n,k}$ for elements $\vec{\ell}\in\M_{n,k}$.
\begin{defi}
Let $\vec{\ell}\in \M_{n,k}$~\eqref{eq:Mnk}. Then $\sigma=\sigma_{n,k}(\vec{\ell})$ is defined as the number
of elements of $\vec{\ell}$ equal to the preceding one in $\vec{\ell}$, or in other words
\[
\sigma(\vec{\ell}) =\sigma(\ell_1,\dots,\ell_k) = |\{1 \le  j \le k - 1\colon\,\, \ell_j = \ell_{j+1} \}|.
\]
\end{defi}

\begin{example}
Given $\vec{\ell}=(1,1,2,2,2,4,6)\in\M_{n,7}$ then $\sigma(\vec{\ell})=3$. Given $\vec{\ell}=(5,5,5,5)\in\M_{n,3}$ then $\sigma(\vec{\ell})=3$. 
\end{example}
\begin{example}
The parameter $\sigma$ relates $\M_{n,k}$ and $\S_{n,k}$, as defined in~\eqref{eq:Mnk},~\eqref{eq:Rnk}, respectively:
\[
\S_{n,k}=\{\vec{\ell}\in\M_{n,k}\colon \sigma(\vec{\ell})=0\}.
\]
\end{example}
Next we define multiplicative weights for $\vec{\ell}\in\M_{n,k}$ and the weighted enumerations of $\M_{n,k}$~\eqref{eq:Mnk}. 

\begin{defi}[Interpolated weighted multiset sums]
Given an the weight sequence $\boldsymbol{a}=(a_{j})_{j\in \N}$ of positive real numbers $a_j$. Then, 
the weight $w(\vec{\ell})=w(\vec{\ell},\boldsymbol{a})$ of $\vec{\ell}=(\ell_1,\dots,\ell_k)\in\M_{n,k}$ is defined in terms of $\boldsymbol{a}$ as $w(\vec{\ell})=\prod_{j=1}^{k}a_{\ell_{j}}$ and $\theta_{n;k}(t)=\theta_{n;k}(t;\boldsymbol{a})$ is defined as 
\begin{equation}
\label{def:theta}
\theta_{n;k}(t)=\sum_{\vec{\ell}\in\M_{n,k}}w(\vec{\ell})t^{\sigma(\vec{\ell})}
=\sum_{n \ge \ell_1\ge \cdots\ge \ell_k\ge 1}a_{\ell_1}a_{\ell_2}\dots a_{\ell_k}t^{\sigma(\vec{\ell})}.
\end{equation}
\end{defi}
Note that we use the convention $\theta_{n;0}(t)=1$ in the boundary case $k=0$.

\begin{remark}
The construction is also well defined for 
\begin{equation}
\M_{\infty,k}=\{\vec{\ell}\in\N^k\colon 1\le \ell_k\le \dots \le \ell_2\le \ell_1 <\infty\},
\label{eq:Minfty}
\end{equation}
as long as the weight sequence $\boldsymbol{a}=(a_{j})_{j\in \N}$ is chosen in such a way that
$\theta_{\infty;k}(t)=\sum_{\vec{\ell}\in\M_{\infty,k}}w(\vec{\ell})t^{\sigma(\vec{\ell})}$
converges. This construction is very closely related to the generalized multiple zeta values of Vignat and Wakhare~\cite{VW}.
\end{remark}

\begin{remark}[Interpolated complete symmetric polynomials]
\label{RemarkSym}
Regarding the weight sequence $\boldsymbol{a}=(a_{j})_{j\in \N}$ as variables, the values $\theta_{n;k}(t)$ can also be interpreted in an algebraic way. 
This is no surprise, due to the well known relationship between the multiple zeta values and quasi-symmetric functions~\cite{H2000,H2018,HI,VW,Z}.
They relate complete elementary symmetric polynomials $h_k$ and elementary symmetric polynomials $e_k$: $\theta_{n;k}(t)=\theta_{n;k}(t;\boldsymbol{a})$ at $t=0$ or $t=1$ is given by
\[
\theta_{n;k}(0;\boldsymbol{a})=e_k(a_1,\dots,a_n)=\sum_{n \ge \ell_1> \cdots> \ell_k\ge 1}a_{\ell_1}a_{\ell_2}\dots a_{\ell_k},
\]
and
\[
\theta_{n;k}(1;\boldsymbol{a})=h_k(a_1,\dots,a_n)=\sum_{n \ge \ell_1\ge \cdots\ge \ell_k\ge 1}a_{\ell_1}a_{\ell_2}\dots a_{\ell_k}.
\]
We will see later that the power sums $p_k$ on $n$ variables,
\begin{equation}
\label{eqn:powerSum}
p_k(a_1,\dots,a_n)=A_n(k)=\sum_{m=1}^{n}a_m^k,
\end{equation}
also appear in an alternative representation of $\theta_{n;k}(t)$.
\end{remark}

\begin{example}[Interpolated $k$-sets and multisets]
\label{ExampleMultiset}
Given the weight sequence $\boldsymbol{a}=(1)_{j\in\N}$ we obtain 
\[
\theta_{n;k}(t)=\sum_{n \ge \ell_1\ge \cdots\ge \ell_k\ge 1}t^{\sigma(\vec{\ell})},
\]
interpolating between  $k$-sets and multisets of $[n]$.
The special values $\theta_{n;k}(0)$ and $\theta_{n;k}(1)$ enumerate $k$-sets~\eqref{eq:Rnk} and multisets~\eqref{eq:Mnk} of $[n]$:
\[
\theta_{n;k}(0)=|\S_{n,k}|=\binom{n}{k},\quad \theta_{n;k}(1)=|\M_{n,k}|=\binom{n+k-1}{k}.
\]
\end{example}

\smallskip

\begin{example}
Given the weight sequence $\boldsymbol{a}=(j)_{j\in\N}$ we obtain 
\[
\theta_{n;k}(t)=\sum_{n \ge \ell_1\ge \cdots\ge \ell_k\ge 1}\ell_1\cdots \ell_k\cdot t^{\sigma(\vec{\ell})}.
\]
The special values are given in terms of the Stirling numbers of the first and second kind~\cite{GraKnuPa}:
\[
\theta_{n;k}(0)=\stir{n+1}{n+1-k}=n!\cdot\zt_{n}(\{1\}_{n+k}),\quad \theta_{n;k}(1)=\Stir{n+k}{n}.
\]
%
%
%
%

\end{example}

\begin{example}[Interpolated truncated multiple zeta values]
Given the weight sequence $\boldsymbol{a}=(\frac1{j^{m}})_{j\in\N}$, $m>0$ we obtain 
a special instance of the truncated interpolated multiple zeta values,
\[
\theta_{n;k}(t)=\ztt_n(\{m\}_k)=\sum_{n \ge \ell_1\ge \cdots\ge \ell_k\ge 1}\frac{t^{\sigma(\vec{\ell})}}{\ell_1^{m}\cdots \ell_k^{m}},
\]
such that $\theta_{n;k}(0)=\zt_n(\{m\}_k)$ and $\theta_{n;k}(1)=\zts_n(\{m\}_k)$.
\end{example}

\subsection{Ordered partitions and Interpolated multiple zeta values}
Let $\vec{\ell}\in\M_{n,k}$~\eqref{eq:Mnk}. We can associate an ordered partition $\mathbf{p}\in \mathcal{P}_O(k)$ to $\vec{\ell}$ as follows: there exist integers $1\le s\le k$ and $n\ge j_1>j_2>\dots>j_s\ge 1$ with multiplicities
$r_1,\dots,r_s\in \N$ with $\sum_{i=1}^{s}r_i=k$ such that $\vec{\ell}=\vec{j}=(j_s^{r_s},\dots,j_1^{r_1})$. Here, we use that shorthand notation $j_i^{r_i}$ for $j_i$ appearing exactly $r_i$ times. We refer to the ordered partition $\mathbf{p}=\mathbf{p}(\vec{j})=(r_1,\dots,r_s)\in \mathcal{P}_O(k)$ as the shape of $\vec{j}$. The shape, as a map, is a surjection from $\M_{n,k}$ to $\mathcal{P}_O(k)$. 
Note that 
\[
\sigma(\vec{j})=\sum_{i=1}^{s}(r_i-1)=k-s=k-\laenge(\mathbf{p}).
\]  
We may write 
\[
\theta_{n;k}(t)=\sum_{\vec{\ell}\in\M_{n,k}}a_{\ell_1}\cdot\dots\cdot a_{\ell_k} t^{\sigma(\vec{\ell})}
=\sum_{\mathbf{p}\in \mathcal{P}_O(k)}\sum_{\substack{\vec{\ell}\in\M_{n,k}\\\text{shape}(\vec{\ell})=\mathbf{p} } } a_{\ell_1}\cdot\dots\cdot a_{\ell_k} t^{\sigma(\vec{\ell})}
\]
Thus, for $\boldsymbol{a}=(\frac1{j^{m}})_{j\in\N}$ we get
\begin{equation*}
\begin{split}
\theta_{n;k}(t)
&= \sum_{\mathbf{p}\in \mathcal{P}_O(k)}\sum_{\substack{\vec{j}\in\M_{n,k}\\\text{shape}(\vec{j})=(r_1,\dots,r_s)=\mathbf{p} } } \frac1{j_1^{m\cdot r_1}}\cdot\dots\cdot \frac1{j_s^{ m\cdot r_s }} t^{\sigma(\vec{j})}\\
&= \sum_{\mathbf{p}\in \mathcal{P}_O(k)}\sum_{\substack{n\ge j_1>j_2>\dots>j_s\ge 1\\\text{shape}(\vec{j})=(r_1,\dots,r_s)=\mathbf{p} } } \frac1{j_1^{m\cdot r_1}}\cdot\dots\cdot \frac1{j_s^{ m\cdot r_s }} t^{\sigma(\vec{j})}\\
&=\sum_{\mathbf{p}\in \mathcal{P}_O(k)}\zt_n(m\cdot r_1,\dots,m\cdot r_s)t^{k-s}=\sum_{\mathbf{p}\in \mathcal{P}_O(k)}\zt_n(m\cdot \mathbf{p})t^{k-\laenge(p)}.
\end{split}
\end{equation*}

\section{Generating functions and Bell polynomials}
In this section we use symbolic combinatorics to obtain the generating function $\Theta_{n}(z,t)=\sum_{k\ge 0}\theta_{n;k}(t)z^k$.
This leads then to several expressions for $\theta_{n;k}(t)$, collected in our main result, Theorem~\ref{PropGF},
generalizing several results in the literature~\cite{Bati,D,HI,Prode2010,KP}. To the best of the author's knowledge, our general results have not appeared before
in literature, although related ideas have been developed earlier or in parallel, as mentioned in the introduction.
Let $\M_n$ denote all multisets of $[n]$, $\M_n=\bigcup_{k=1}^{\infty}\M_{n,k}$, with $\M_{n,k}$ as stated in~\eqref{eq:Mnk}. In order to gain more insight into $\theta_{n;k}(t)$ we use symbolic combinatorics and study the generating function 
\begin{equation*}
\begin{split}
\Theta_{n}(z,t)&=\sum_{\vec{\ell}\in\M_{n}}w(\vec{\ell})t^{\sigma(\vec{\ell})}z^{|\vec{\ell}|}
=\sum_{k\ge 0}\theta_{n;k}(t)z^k\\
&=\sum_{k\ge 0}z^k\sum_{n \ge \ell_1\ge \cdots\ge \ell_k\ge 1}a_{\ell_1}a_{\ell_2}\dots a_{\ell_k}t^{\sigma(\vec{\ell})}.
\end{split}
\end{equation*}

\begin{thm}[Generating functions and power sums]
\label{ThmGF}
The generating function $\Theta_{n}(z,t)$ is given by
\begin{equation}
\label{eq:ThmGF1}
\Theta_{n}(z,t)=\prod_{m=1}^n\Big(1+\frac{a_m z}{1-a_m z t}\Big),
\end{equation}
as well as 
\begin{equation}
\label{eq:ThmGF2}
\Theta_{n}(z,t)=\exp\Big(\sum_{j=1}^{\infty}\frac{z^j}{j}A_n(j)\big( t^j-(t-1)^j\big)\Big),
\end{equation}
with $A_n(j)$ as in~\eqref{eqn:powerSum}. Moreover, assume that $t\neq 0$ and that the $a_1,\dots,a_n>0$ are all distinct. 
Then, 
\begin{equation}
\label{eq:ThmGF3}
\Theta_{n}(z,t)=\frac{(t-1)^n}{t^n}+(-1)^n\sum_{j=1}^n\bigg(\frac{\prod_{m=1}^{n}\big(\frac{1-t}{a_j t}+\frac{1}{a_m}\big) }{
\prod_{\substack{\ell=1\\ \ell\neq j}}^n (\frac{1}{a_j }-\frac{1}{a_\ell })} \bigg)\cdot\frac{1}{zt-\frac{1}{a_k }}. 
\end{equation}
\end{thm}

\begin{remark}
Very similar results also hold for $n\to\infty$ and infinite multisets $\M_{\infty,k}$~\eqref{eq:Minfty}.
In special cases it is possible to derive the complete generating function
$T(x,z,t)=\sum_{n\ge 0}\Theta_n(z,t)x^n=\sum_{n\ge 0}\sum_{k\ge 0}\theta_{n;k}(t) x^n z^k$. 
For $(a_j)=(1)_{j\in\N}$ we get
\[
T(x,z,t)=\frac{1-zt}{(1-zt)(1-x)-zx}.
\]
For $(a_j)=(\frac1j)$ we get an ordinary hypergeometric function
\[
T(x,z,t)={}_2F_1(1,1-z(t-1),1-z t;x).
\]
Similar, but more involved, partial fraction decompositions exist when the $a_m$ are not all distinct.
\end{remark}

Before we state the proof of Theorem~\ref{ThmGF} we collect a result on partial fraction decomposition.
\begin{lem}[Partial fraction decomposition]
\label{Lemma:parFrac}
Let $p_n(z)$ and $q_n(z)$ denote monic polynomials of degree $n$ with distinct zeros given by $-\alpha_k$ and $\beta_k$, $1\le k\le n$, respectively, 
such that $\{-\alpha_1,\dots,-\alpha_n\}\cap \{\beta_1,\dots,\beta_n\}=\emptyset$.
The rational function $p_n(z)/q_n(z)$ has the partial fraction decomposition
\[
\frac{p_n(z)}{q_n(z)}
=\prod_{k=1}^{n}\frac{z+\alpha_k}{z-\beta_k}
=1+\sum_{j=1}^n\bigg(\frac{p_n(\beta_j)}{\prod_{\substack{\ell=1\\ \ell\neq j}}^n (\beta_j-\beta_\ell)}\bigg)\cdot\frac{1}{z-\beta_j}.
\]
\end{lem}

\begin{proof}[Proof of Theorem~\ref{ThmGF}]
First, we derive~\eqref{eq:ThmGF1} using symbolic constructions from analytic combinatorics, see Flajolet and Sedgewick~\cite{FS2009}.
Let $\mathcal{Z}_m=\{m\}$ be a combinatorial class of size one, $1\le m\le n$. Due to the sequence construction we can describe the class of multisets $\mathcal{B}_m$ of $\mathcal{Z}_m$ as follows
\[
\mathcal{B}_m=\text{SEQ}(\mathcal{Z}_m)=\{\epsilon\} + \mathcal{Z}_m+ \mathcal{Z}_m\times \mathcal{Z}_m+\mathcal{Z}_m\times \mathcal{Z}_m\times \mathcal{Z}_m+\dots;
\]
Thus, the generating function 
\[
B_m(z)=\sum_{\beta\in\mathcal{B}_m}w(\beta)t^{\sigma(\beta)}z^{|\beta|}= 1+\sum_{\epsilon\neq \beta\in\mathcal{B}_m}w(\beta)t^{|\beta|-1}z^{|\beta|}
\] 
is given by
\[
B_m(z)=1+\sum_{j=1}^{\infty}t^{j-1}a_m^j z^{j}=1+\frac{a_mz}{1-a_m t z}.
\]
All multisets $\M_n=\bigcup_{k=1}^{\infty}\M_{n,k}$ of $[n]$ can be combinatorially described by
\[
\M_n=\mathcal{B}_1\times \mathcal{B}_2\times\dots \times \mathcal{B}_n.
\]
Hence, the generating function $\Theta_n(z,t)$ is given by
\[
\Theta_n(z,t)=\prod_{m=1}^{n}B_m(z)=\prod_{m=1}^{n}\left(1+\frac{a_mz}{1-a_m t z}\right).
\]
Next, we use the $\exp-\log$ representation to obtain the expression~\eqref{eq:ThmGF2} for $\Theta_{n}(z,t)$.
\begin{equation*}
\begin{split}
\Theta_{n}(z,t)&=\prod_{m=1}^n\Big(1+\frac{a_m z}{1-a_m z t}\Big)= \prod_{m=1}^n\exp\Big(\log\big(1+\frac{a_m z}{1-a_m z t}\big)\Big)\\
&=\exp\Big(\sum_{m=1}^n\log\big(1-a_m z (t-1)\big) - \sum_{m=1}^n\log\big(1-a_m z t\big)\Big).
\end{split}
\end{equation*}
We use~\eqref{eqn:powerSum} and expand the two logarithm functions:
\begin{equation*}
\begin{split}
\Theta_{n}(z,t)&=\exp\Big(\sum_{m=1}^n\sum_{j=1}^{\infty}\frac{a_m^j z^j}{j}\big( t^j-(t-1)^j\big)\Big)=\exp\Big(\sum_{j=1}^{\infty}\frac{z^j}{j}A_n(j)\big( t^j-(t-1)^j\big)\Big).
\end{split}
\end{equation*}
Finally, we turn to~\eqref{eq:ThmGF3}. We assume that $t\neq 0$ and that the $a_1,\dots,a_n>0$ are all distinct. Then, we can write $\Theta_n(z,t)$ 
as follows:
\[
\Theta_n(z,t)=\frac{(t-1)^n}{t^n}\cdot\prod_{m=1}^{n}\frac{z+\frac{1}{a_m(1-t)}}{z-\frac{1}{a_m t}}.
\]
Consequently, applying partial fraction decomposition (see Lemma~\ref{Lemma:parFrac}) with $\alpha_k=\frac{1}{a_k(1-t)}$ and $\beta_k=\frac{1}{a_k t}$ 
gives
\[
\Theta_n(z,t)=\frac{(t-1)^n}{t^n}\bigg(1+\sum_{j=1}^n\bigg(\frac{\prod_{m=1}^{n}\big(\frac{1}{a_j t}+\frac{1}{a_m(1-t)}\big) }{
\prod_{\substack{\ell=1\\ \ell\neq j}}^n (\frac{1}{a_j t}-\frac{1}{a_\ell t})} \bigg)\cdot\frac{1}{z-\frac{1}{a_j t}}\bigg).
\]
Multiplying the sum with $\frac{(t-1)^n}{t^n}$ directly gives the stated expression~\eqref{eq:ThmGF3}.
\end{proof}

As a consequence of the theorem before, we obtain alternative expressions for the values $\theta_{n;k}(t)$~\eqref{def:theta}.
In the following we use the complete Bell polynomials $B_n(x_1,\dots,x_n)$, which are defined via the identity
\begin{equation}
\label{def:BellPoly}
\exp\Big(\sum_{\ell\ge 1}\frac{z^{\ell}}{\ell!}x_\ell\Big)
= \sum_{j\ge 0}\frac{B_j(x_1,\dots,x_j)}{j!}z^j.
\end{equation}

\begin{thm}
\label{PropGF}
The values $\theta_{n;k}(t)$~\eqref{def:theta} can be expressed in various ways:
\begin{itemize}
	\item in terms of values $\theta_{n;k}(0)$ and $\theta_{n;k}(1)$:
\begin{equation}
\theta_{n;k}(t)=\sum_{j=0}^{k}t^j\theta_{n,j}(1)\cdot (1-t)^{k-j}\theta_{n,k-j}(0).
\label{eq:PropGF1}
\end{equation}
\item Bell polynomials and the power sums $A_n(j)$~\eqref{eqn:powerSum}:
\begin{equation}
\begin{split}
\label{eq:PropGF2}
\theta_{n;k}(t)&=\frac1{k!} B_k(0!A_n(1)\big( t^1-(t-1)^1\big),\dots,(k-1)!A_n(k)\big( t^k-(t-1)^k\big))\\
&=\sum_{m_1+2m_2+\dots=k}\frac{1}{m_1!m_2!\dots}\Big(\frac{A_n(1)( t^1-(t-1)^1)}{1}\Big)^{m_1} \Big(\frac{A_n(2)( t^2-(t-1)^2)}{2}\Big)^{m_2}\dots.
\end{split}
\end{equation}
\item Determinantal expression: let $\alpha_n(j;t)=A_n(j)( t^j-(t-1)^j)$, it holds
\begin{equation}
\label{eq:PropGF3}
\theta_{n;k}(t)=\frac{1}{k!}\left|
\begin{matrix}
\alpha_n(1;t) & -1 & 0 & \dots &0\\
\alpha_n(2;t) & \alpha_n(1;t) & -2 & \dots &0\\
\hdots & \hdots & \hdots & \vdots &\hdots\\
\alpha_n(k-1;t) & \alpha_n(k-2;t)& \alpha_n(k-3;t) & \dots &-(k-1)\\
\alpha_n(k;t) & \alpha_n(k-1;t) & \alpha_n(k-2;t) & \dots &\alpha_n(1;t)\\
\end{matrix}
\right|.
\end{equation}
\item Combinatorial sum: assume that $t\neq 0$ and that the values $a_1,\dots,a_n>0$ are all distinct. 
Then, 
\begin{equation}
\label{eq:PropGF4}
\theta_{n;k}(t)
=(-1)^{n-1}\sum_{j=1}^n\bigg(\frac{\prod_{m=1}^{n}\big(\frac{1-t}{a_j t}+\frac{1}{a_m}\big) }{
\prod_{\substack{\ell=1\\ \ell\neq j}}^n (\frac{1}{a_j }-\frac{1}{a_\ell })} \bigg)\cdot a_j^{k+1}t^k. 
\end{equation}
\end{itemize}
\end{thm}

\begin{example}
For $t=\frac12$ we solely sum over the odd indices $m_1,m_3,\dots$, since only $m_2=m_4=\dots=0$ lead to a positive contribution.
We obtain 
\begin{equation*}
\begin{split}
\theta_{n;k}\big(\frac12\big)&=\frac1{k!} B_k\Big(0!A_n(1)\cdot 2\cdot \frac1{2^1},0,2!A_n(1)\cdot 2\cdot \frac1{2^3}\dots,(k-1)!A_n(k)\cdot\big( \frac1{2^k}-(-\frac12)^k\big)\Big)\\
&=\sum_{m_1+3m_3+\dots=k}\frac{2^{m_1+m_3+\dots}}{2^{1\cdot m_1+3\cdot m_3+\dots}m_1!m_3!\dots}\Big(\frac{A_n(1)}{1}\Big)^{m_1} \Big(\frac{A_n(3)}{3}\Big)^{m_3}\dots.
\end{split}
\end{equation*}
\end{example}

A direct byproduct of this result is an expression for $\ztt_n(\{m^k\})$. We state the formula for $\ztt_n(\{1^k\})$, generalizing the already known results for $t=0$, $\zt_n(\{1\}_k)$ and $t=1$, $\zts_n(\{1\}_k)$; see for example~\cite{KP} and the references therein. 

\begin{example}
\label{CorollZeta}
Given the weight sequence $\boldsymbol{a}=(\frac1{j})_{j\in\N}$ we obtain for 
$\theta_{n;k}(t)=\ztt_n(\{1\}_k)$ the results
\[
\ztt_n(\{1\}_k)=\sum_{j=0}^{k}t^j\zts_n(\{1\}_j)\cdot (1-t)^{k-j}\zt_n(\{1\}_{k-j}).
\]
as well as
\begin{equation*}
\begin{split}
\ztt_n(\{1\}_k)&=\frac1{k!} B_k(0!H_n^{(1)}\big( t^1-(t-1)^1\big),\dots,(k-1)!H_n^{(k)}\big( t^k-(t-1)^k\big))\\
&=\sum_{m_1+2m_2+\dots=k}\frac{1}{m_1!m_2!\dots}\Big(\frac{H_n^{(1)}( t^1-(t-1)^1)}{1}\Big)^{m_1} \Big(\frac{H_n^{(2)}( t^2-(t-1)^2)}{2}\Big)^{m_2}\dots.
\end{split}
\end{equation*}
Moreover, we obtain for $t\neq 0$ the identity
\[
\ztt_n(\{1\}_k)=t^k\cdot\sum_{j=1}^{n}\binom{n+\frac{1-t}{t}\cdot j}{n}\binom{n}{j}\frac{(-1)^{j-1}}{j^k}.
\]
For $t=\frac12$ we obtain as a special case a truncated analog of the formula of Hoffman and Ihara~\cite[eqn.~$(41)$]{HI}:
\begin{align*}
\zt^{\frac12}_n(\{1\}_k)
&=\frac{1}{2^k k!}(B_{k}(0!\cdot 2\Ha{n}{1},0,2!\cdot 2\Ha{n}{3},0,\dots)\\
&=\sum_{m_1+3m_3+5m_5\dots=k}\frac{2^{m_1+m_3+m_5+\dots}}{2^{k} m_1!m_3!\dots}\Big(\frac{\Ha{n}{1}}{1}\Big)^{m_1} \Big(\frac{\Ha{n}{3}}{3}\Big)^{m_3}\dots.
\end{align*}
The combinatorial sum extends a result of Prodinger for $t=\frac12$~\cite{Prode2010} (see also~\cite{Prode2008,ProdeTau2010,Verma}), 
originally stated solely in terms of the combinatorial sum and the expression in terms of Bell polynomials~\eqref{def:BellPoly}:
\[
\zt^{\frac12}_n(\{1\}_k)=\frac{1}{2^k}\sum_{j=1}^{n}\binom{n+j}{j}\binom{n}{j}(-1)^{j-1}\frac{1}{j^k}.
\]
Finally, we mention that for $t=1$ the combinatorial sum turns into a well-known and often rediscovered formula~\cite{Bati,D,FS,KP}:
\[
\zts_n(\{1\}_k)=\sum_{j=1}^{n}\binom{n}{j}(-1)^{j-1}\frac{1}{j^k}.
\]
\end{example}

\smallskip

\begin{proof}[Proof of Theorem~\ref{PropGF}]
First, we turn to the expression~\eqref{eq:PropGF1} for $\theta_{n,k}(t)$.
From Theorem~\ref{ThmGF} and~\eqref{eq:ThmGF1} we obtain 
\begin{align*}
\Theta_{n}(z,t)&
=\prod_{m=1}^n\Big(1+\frac{a_m z}{1-a_m z t}\Big)
=\prod_{m=1}^n\frac{1+a_m z(1-t)}{1-a_m z t}\\
&=\bigg(\sum_{j\ge 0 }(zt)^j\theta_{n,j}(1)\bigg)\cdot
\bigg(\sum_{j\ge 0 }z^j(1-t)^j\theta_{n,j}(0)\bigg).
\end{align*}
Thus, extraction of coefficients gives the stated result:
\begin{align*}
\theta_{n,k}(t)&
=[z^k]\bigg(\sum_{j\ge 0 }(z t)^j\theta_{n,j}(1)\bigg)\cdot
\bigg(\sum_{j\ge 0 }z^j(1-t)^j\theta_{n,j}(0)\bigg)\\
&=\sum_{j=0}^{k}t^j\theta_{n,j}(1)\cdot (1-t)^{k-j}\theta_{n,k-j}(0).
\end{align*}

\smallskip

For~\eqref{eq:PropGF2} we use~\eqref{eq:ThmGF2} of Theorem~\ref{ThmGF}. The definition of the Bell polynomials~\eqref{def:BellPoly} 
and extraction of coefficients, $\theta_{n,k}(t)=[z^k]\Theta_{n}(z,t)$, then directly gives the stated results,
with the Bell polynomials~\eqref{def:BellPoly} evaluated at $x_\ell=(\ell-1)!A_n(\ell)\big( t^\ell-(t-1)^\ell\big)$.
Following Hoffman~\cite{H2017}, the determinant~\eqref{eq:PropGF3} can be obtained using the theory of symmetric functions. 
Let $x_1,x_2,\dots$ denote variables of degree one. As in Remark~\ref{RemarkSym} let $h_k$ denote the complete symmetric functions and $p_k$ the power sums. There exists polynomials $Q_k$ such that
\[
h_k=Q_k(p_1,\dots,p_k).
\]
Let $H(z)$ and $P(z)$ denote the generating functions 
of $h_k$ and $p_k$:
\[
H(z)=\sum_{k=0}^{\infty}h_k z^k=\prod_{i\ge 1}\frac{1}{1-z x_i},
\quad 
P(z)=\sum_{k=1}^{\infty}p_k z^k=\sum_{i\ge 1}\frac{x_i}{1-z x_i}.
\]
Then, 
\[
H(z)=\exp\left(\int_{0}^{z}P(v)dv \right)=\exp\left(-\sum_{i\ge 1}\ln(1-z x_i)\right)
=\exp\left(\sum_{j\ge 1}z^j \cdot \frac{p_j}j\right).
\]
This gives the Bell polynomial type expression for the $Q_k$:
\[
Q_k(p_1,\dots,p_k)=\sum_{m_1+2m_2+\dots =k}\frac{1}{m_1!m_2!\dots}
\Big(\frac{p_1}{1}\Big)^{m_1}\Big(\frac{p_2}{2}\Big)^{m_2}\dots .
\]
On the other hand, MacDonald~\cite{MacDo} gives the determinant
\[
Q_k(y_1,\dots,y_k)=\frac{1}{k!}\left|
\begin{matrix}
y_1 & -1 & 0 & \dots &0\\
y_2 & y_1 & -2 & \dots &0\\
\hdots & \hdots & \hdots & \vdots &\hdots\\
y_{k-1} & y_{k-2} & y_{k-3} & \dots &-(k-1)\\
y_{k} & y_{k-1} & y_{k-2} & \dots &y_1\\
\end{matrix}
\right|,
\]
which proves the stated result. 
Finally, the combinatorial expression~\eqref{eq:PropGF4} is readily obtained by extraction of coefficients from the partial fraction decomposition~\eqref{eq:ThmGF3} of $\Theta_n(z,t)$
in Theorem~\ref{ThmGF}.
\end{proof}

\section{Further generalizations}
\subsection{Refinements of the parameter \texorpdfstring{$\sigma$}{sigma}}
\begin{defi}[Refined parameter sigma]
Let $\vec{\ell}\in \M_{n,k}$~\eqref{eq:Mnk}. Then $\sigma^{(i)}(\vec{\ell})=\sigma^{(i)}_{n,k}(\vec{\ell})$ is defined as the number equal signs in $\vec{\ell}$ stemming from numbers $i\in\N$. Then, as functions acting from $\M_{n,k}$ to $\N_0$:
\[
\sigma=\sum_{i=1}^{n}\sigma^{(i)}.
\]
\end{defi}
\begin{example}
Given $\vec{\ell}=(1,1,2,2,2,4,6)\in\M_{n,7}$ then $\sigma^{(1)}=1$, $\sigma^{(2)}=2$ and $\sigma^{(i)}=0$, $i>2$.  
\end{example}

In the following we use the vector notation 
$\vec{t}^{\vec{\sigma}(\vec{\ell})}=t_1^{\sigma_1(\vec{\ell})}\dots t_n^{\sigma_n(\vec{\ell})}$.

\begin{defi}[Multi-interpolated weighted multiset sums]
Given the weight sequence $\boldsymbol{a}=(a_{j})_{j\in \N}$. 
Let $\theta_{n;k}(\vec{t})=\theta_{n;k}(\vec{t};\boldsymbol{a})$ be defined 
as 
\[
\theta_{n;k}(\vec{t})=\sum_{\vec{\ell}\in\M_{n,k}}w(\vec{\ell})\vec{t}^{\vec{\sigma}(\vec{\ell})}
:=\sum_{n \ge \ell_1\ge \cdots\ge \ell_k\ge 1}a_{\ell_1}a_{\ell_2}\dots a_{\ell_k}t_1^{\sigma^{(1)}(\vec{\ell})}\dots t_n^{\sigma^{(n)}(\vec{\ell})}.
\]
\end{defi}

The generating function $\Theta_{n}(z,\vec{t})=\sum_{k\ge 0}\theta_{n;k}(\vec{t})z^k=\sum_{\ell\in\M_n}w(\vec{\ell})\vec{t}^{\vec{\sigma}(\vec{t})}z^{|\vec{\ell}|}$ can readily be obtained using the symbolic methods.
\begin{thm}[Generating functions and power sums]
The generating function $\Theta_{n}(z,\vec{t})$ is given by
\begin{equation*}
\begin{split}
\Theta_{n}(z,\vec{t})&=\prod_{m=1}^n\Big(1+\frac{a_m z}{1-a_m z t_m}\Big)
=\exp\Big(\sum_{j=1}^{\infty}\frac{z^j}{j}\sum_{m=1}^{n}a_m^j\big( t_m^j-(t_m-1)^j \big)\Big).
\end{split}
\end{equation*}
Moreover, assume that for $1\le m \le n$ we have $a_m\cdot t_m>0$ and distinct. 
Then, 
\[
\Theta_n(z,\vec{t})=\prod_{m=1}^{n}\frac{t_m-1}{t_m}+(-1)^n\sum_{j=1}^n\bigg(\frac{\prod_{m=1}^{n}\big(\frac{1-t_m}{a_j t_j}+\frac{1}{a_m}\big) }{\prod_{\substack{\ell=1\\ \ell\neq j}}^n (\frac{t_\ell}{a_j t_j}-\frac{1}{a_\ell}} \bigg)\cdot\frac{1}{z t_j-\frac{1}{a_j}}\bigg).
\]
\end{thm}

The proof of the theorem is very similar to the proof of Theorem~\ref{PropGF} and therefore omitted.
%
%
%

\begin{example}[Even-odd interpolated truncated multiple zeta functions]
For even indices $m$ let $t_m=t_E$, whereas for odd indices let $t_m=t_O$. 
Then,
\[
\theta_{n;k}(t_E,t_O)=\sum_{\vec{\ell}\in\M_{n,k}}w(\vec{\ell})t^{\sigma^{E}(\vec{\ell})}t^{\sigma^{O}(\vec{\ell})}
\]
The generating function is
\[
\Theta_{n}(z,t_E,t_O)=\exp\Big(\sum_{j=1}^{\infty}\frac{z^j}{j}\Big[\big(t_O^j-(t_O-1)^j \big)\sum_{\substack{m=1\\ m\text { odd}}}^{n}a_m^j+\big(t_E^j-(t_E-1)^j \big)\sum_{\substack{m=1\\ m\text{ even}}}^{n}a_m^j\Big]\Big).
\]
We consider for example $a_m=\frac{1}m$, and assume $n=2N$.
Then,
\[
\sum_{\substack{m=1\\ m\text {even}}}^{n}a_m^j=\frac1{2^j}\sum_{m=1}^{N}\frac{1}{m^j}=\frac1{2^j}H_N^{(j)}
\]
and
\[
\sum_{\substack{m=1\\ m\text{ odd}}}^{n}a_m^j=\sum_{m=1}^{N}\frac{1}{(2m-1)^j}=H_{2N}^{(j)}-\frac1{2^j}H_N^{(j)}.
\]
Thus,
\[
\Theta_{2N}(z,t_E,t_O)=\exp\Big(\sum_{j=1}^{\infty}\frac{z^j}{j}\Big[\big(t_O^j-(t_O-1)^j \big)(H_{2N}^{(j)}-\frac1{2^j}H_N^{(j)})
+\big(t_E^j-(t_E-1)^j \big)\frac1{2^j}H_N^{(j)}\Big]\Big).
\]
\end{example}

\subsection{Sums of multisets: partitions}
Yet another direction would be to introduce a new parameter $p=p_{n,k}$ on $\M_{n,k}$~\eqref{eq:Mnk}, 
with $p$ the one-norm of $\vec{\ell}$. It measures the numbers $p(\vec{\ell})$ the partitions $\vec{\ell}\in\M_{n,k}$ induce:
$p(\vec{\ell})=\Vert \vec{\ell}\Vert_1=\sum_{i=1}^{k}\ell_k$. Then, 
\[
\theta_{n;k}(t,q)=\sum_{\vec{\ell}\in\M_{n,k}}w(\vec{\ell})q^{p(\vec{\ell})}t^{\sigma(\vec{\ell})}
=\sum_{n \ge \ell_1\ge \cdots\ge \ell_k\ge 1}a_{\ell_1}a_{\ell_2}\dots a_{\ell_k}q^{\sum_{i=1}^{k}\ell_k} t^{\sigma(\vec{\ell})}.
\]
Note that this can alternatively be achieved by a change of weights $a_m\mapsto a_m\cdot q^m$. 
Then, the generating function $\Theta_{n}(z,t,q)=\sum_{k\ge 0}\theta_{n;k}(t,q)$ 
satisfies 
\[
\Theta_{n}(z,t,q)=\prod_{m=1}^n\Big(1+\frac{a_m q^m z}{1-a_m q^m z t}\Big)
.
\] It is then possible to study the distribution of $p$, or the joint distribution of $p$ and $\sigma$
on $\M_{n,k}$. 

\begin{example}[Partitions - distributions]
Setting $t=1$ we the generating function $\Theta_{n}(z,1,q)=\sum_{\vec{\ell}\in\M_{n}}w(\vec{\ell})q^{p(\vec{\ell})}z^{|\vec{\ell}|}$
simplifies to $\prod_{m=1}^n\Big(\frac{1}{1-a_m q^m z}\Big)$.
\end{example}

\begin{example}[Partitions and infinite multisets]
Moreover, setting $a_m=z=t=1$ and letting $n\to\infty$ in $\Theta_{n}(z,t,q)$ leads us directly
to the generating function of the partition function:
\[
\Theta_{\infty}(1,1,q)=\sum_{\vec{\ell}\in\M_\infty}q^{p(\vec{\ell})}=\prod_{m=1}^{\infty}\frac{1}{1-q^m}.
\]
\end{example}

\section{Distributions and limit laws}
Before, we treated the variable $t$ as a parameter, mainly between zero and one, interpolating
between weighted $k$-subsets and $k$-multisets or truncated multiple zeta values and their star counterparts. Here, we are interested in the distribution of $\sigma$ and introduce a random variable $S_{n,k}$. In the following we denote with $[z^j]$ the extraction of coefficients operator, $[z^j]f(x)=[z^j]\sum_{k=0}^{\infty}f_k \cdot z^k=f_j$, $j\in\N_0$.

\begin{defi}[Random variable $S_{n,k}$]
Given the weight sequence $\boldsymbol{a}=(a_{j})_{j\in \N}$ with positive real weights $a_j$.
The random variable $S_{n,k}$ counts the number of equal signs in an element $\vec{\ell}\in\M_{n,k}$~\eqref{eq:Mnk}.
Its probability mass function is defined in terms of $\theta_{n;k}(t)$~\eqref{def:theta}:
\[
\P\{S_{n,k}=j\}:=\frac{[t^j]\theta_{n;k}(t)}{\theta_{n;k}(1)},\quad 0\le j\le k-1;
\]
equivalently, the probability generating function $\E(t^{S_{n,k}})$
is given by 
\[
\E(t^{S_{n,k}})=\frac{\theta_{n;k}(t)}{\theta_{n;k}(1)}.
\]
\end{defi}
Note the the actual support of $S_{n,k}$ is the range $\max\{0,k-n\}\le j \le k-1$, 
since for $k \ge n$ at most $n$ elements out of a total of $k$ can be
unequal.

\smallskip

The expected value 
\[
\E(S_{n,k})=\frac{\sum_{n \ge \ell_1\ge \cdots\ge \ell_k\ge 1}\sigma(\vec{\ell})w(\vec{\ell})}{\sum_{n \ge \ell_1\ge \cdots\ge \ell_k\ge 1}w(\vec{\ell})}
\]
can be obtained by extraction of coefficients:
\begin{equation}
\label{E}
\E(S_{n,k})=\frac{\theta_{n;k}'(1)}{\theta_{n;k}(1)}=\frac{[z^k]E_t\frac{\partial}{\partial t}\Theta_{n}(z,t)}{\theta_{n;k}(1)}.
\end{equation}
Here $E_t$ denotes the evaluation operator at $t=1$. 
The variance is determined via the second factorial moment $\E(\fallfak{S_{n,k}}2)$,
\begin{equation}
\label{V}
\V(S_{n,k})=\E(\fallfak{S_{n,k}}2)+\E(S_{n,k})-\E(S_{n,k})^2,
\end{equation}
with
\[
\E(\fallfak{S_{n,k}}2)=\frac{\theta_{n;k}''(1)}{\theta_{n;k}(1)}=\frac{[z^k]E_t\frac{\partial^2}{\partial t^2}\Theta_{n}(z,t)}{\theta_{n;k}(1)}.
\]

\subsection{Interpolated \texorpdfstring{$k$}{k}-element multisets}
We choose the weight sequence $\boldsymbol{a}=(1)_{j\in\N}$ and 
obtain the distribution of $S_{n,k}$
\begin{thm}[Distribution - $k$-multisets]
The probability mass function of the random variable $S_{n,k}$ in random $k$ multisets of $[n]$, 
is given by
\[
\P\{S_{n,k}=j\}=\frac{\binom{n}{k-j}\binom{k-1}{j}}{\binom{n+k-1}k},\quad 0\le j\le k-1.
\]
Thus, $S_{n,k}\law \Hy(n+k-1,k-1,k)$ follows a hypergeometric distribution. Its expected value and variance are given by
\[
\E(S_{n,k})=\frac{k(k-1)}{n+k-1},\quad\V(S_{n,k})=\frac{k(k-1)n(n-1)}{(n+k-1)^2(n+k-2)}.
\]
Moreover, the factorial moments are given by 
\[
\E(\fallfak{S_{n,k}}{s})=\frac{\fallfak{k}{s}\cdot\fallfak{(k-1)}{s}}{\fallfak{(n+k-1)}{s}}, \quad s\ge 1.
\]
\end{thm}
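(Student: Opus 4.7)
The plan is to use the shape decomposition already developed in Section~2.2 to count $k$-multisets of $[n]$ stratified by $\sigma$, and then to match the resulting probability mass function against the hypergeometric family. With $\boldsymbol{a}=(1)_{j\in\N}$ the weight $w(\vec{\ell})\equiv 1$, so $S_{n,k}$ is simply the pushforward of the uniform law on $\M_{n,k}$ under the map $\sigma$; in particular $\P\{S_{n,k}=j\}$ is just the proportion of $k$-multisets of $[n]$ with exactly $j$ equalities.

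First, I would recall from the discussion around the shape map that every $\vec{\ell}\in\M_{n,k}$ is uniquely encoded by a pair (support,\,shape), where the support $\{j_s<\cdots<j_1\}\subseteq[n]$ is an $s$-subset and the shape $(r_1,\ldots,r_s)\in P_O(k)$ is a composition of $k$ into $s$ positive parts. The key identity $\sigma(\vec{\ell})=k-\laenge(\mathbf{p})=k-s$ then converts the constraint $\sigma(\vec{\ell})=j$ into the combinatorial constraint $s=k-j$. Counting the two factors separately, the number of admissible supports is $\binom{n}{k-j}$, while the number of compositions of $k$ into exactly $k-j$ parts is $\binom{k-1}{k-j-1}=\binom{k-1}{j}$. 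Dividing by $|\M_{n,k}|=\binom{n+k-1}{k}$ gives
\[
\P\{S_{n,k}=j\}=\frac{\binom{n}{k-j}\binom{k-1}{j}}{\binom{n+k-1}{k}},
\]
which is the claimed PMF.

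Next, I would match this PMF against $\Hy(N,K,n_{\mathrm{h}})$ from Section~\ref{SubProb}: the identification $K=k-1$, $N-K=n$, $n_{\mathrm{h}}=k$ yields $N=n+k-1$ and $n_{\mathrm{h}}-j=k-j$, so indeed $S_{n,k}\law\Hy(n+k-1,k-1,k)$. With this identification in hand, the factorial moments $\E(\fallfak{S_{n,k}}{s})=\fallfak{k}{s}\fallfak{(k-1)}{s}/\fallfak{(n+k-1)}{s}$ and the explicit expressions for $\E(S_{n,k})$ and $\V(S_{n,k})$ follow verbatim from the general hypergeometric formulas recalled in Section~\ref{SubProb}.

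There is not really a hard step; the only thing to verify carefully is the composition count (compositions of $k$ into $s$ positive parts correspond to choosing $s-1$ of the $k-1$ internal gaps, giving $\binom{k-1}{s-1}$, and then $s=k-j$ turns this into $\binom{k-1}{j}$). As a sanity check one can also specialise Theorem~\ref{ThmGF} at $a_m\equiv 1$ to get the closed form
\[
\Theta_n(z,t)=\Big(1+\frac{z}{1-zt}\Big)^n=\frac{(1+(1-t)z)^n}{(1-tz)^n},
\]
whose coefficient of $z^k$, after expanding via Vandermonde, recovers the same polynomial $\theta_{n;k}(t)=\sum_{j=0}^{k-1}\binom{n}{k-j}\binom{k-1}{j}t^j(\text{after simplification})$; I would mention this alternative derivation only as a cross-check, since it is strictly longer than the shape-based count.
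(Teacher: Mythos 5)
Your proposal is correct and matches the paper's proof in substance: the paper's own argument extracts $[z^kt^j]$ from $\Theta_n(z,t)=\bigl(\tfrac{1-zt+z}{1-zt}\bigr)^n$ and then notes, as an alternative, exactly your elementary count ($\binom{k-1}{j}$ choices for the positions of the $j$ equalities among the $k-1$ adjacent pairs, times $\binom{n}{k-j}$ for the strictly decreasing chain of distinct values), before reading off the moments from the standard hypergeometric formulas. You have merely reversed the emphasis, leading with the shape/composition count and relegating the generating-function extraction to a cross-check; both steps are sound.
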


\begin{proof}
The probability mass function is readily obtained by extraction of coefficients. First, we note that
\[
\Theta_{n}(z,t)=\prod_{m=1}^n\Big(1+\frac{z}{1- z t}\Big)=\Big(\frac{1-z t+z}{1-z t}\Big)^n.
\]
Then,
\begin{equation*}
\begin{split}
\P\{S_{n,k}=j\}&=\frac{[t^j]\theta_{n,k}(t)}{\theta_{n,k}(1)}=\frac{[z^k t^j]\Theta_{n,k}(z,t)}{\binom{n+k-1}{k}}
=\frac1{\binom{n+k-1}{k}}[z^{k-j} (zt)^j]\Big(\frac{1-zt+z}{1-z t}\Big)^n\\
&=\frac{\binom{n}{k-j}}{\binom{n+k-1}{k}}[(zt)^j]\frac1{(1-zt)^{k-j}}=\frac{\binom{n}{k-j}\cdot \binom{k-1}{j}}{\binom{n+k-1}{k}}.
\end{split}
\end{equation*}
Alternatively, observe that $j$ equal signs are distributed amongst $k-1$ places, leading to $\binom{k-1}{j}$,
and that the remaining $k-j$ sums give a factor
\[
\sum_{n \ge \ell_1> \cdots>\ell_{k-j}\ge 1} 1 = |\S_{n,k-j}|=\binom{n}{k-j}.
\]
The expected value, the variance and the factorial moments of the hypergeometric distribution are well known, or directly obtained 
using the Vandermonde identity. 
\end{proof}

\begin{thm}[Limit laws - \texorpdfstring{$k$}{k}-multisets]
The limit laws for $\max\{n,k\}\to\infty$ are given by three different distributions:
\begin{enumerate}
	\item Degenerate case: for $n\to\infty$ and $k=o(\sqrt{n})$: $S_{n,k}\to 0$; 
	similarly, for $k\to\infty$ and $n=o(\sqrt{k})$: $S_{n,k}-k+n\to 0$.  
	\item Poisson range: for $k\to\infty$ and $n\sim c \sqrt{k}$: $S_{n,k}-k+n\to \Po(c^2)$;
	similarly, for $k,n\to\infty$ and $k\sim c\cdot \sqrt{n}$ with $c>0$: $S_{n,k}\to \Po(c^2)$.
	\item Normal range: for $k,n\to\infty$ and $\sqrt{n}\ll k\ll n^2$:
	\[
	\frac{S_{n,k}-\E(S_{n,k})}{\sqrt{\V(S_{n,k})}}\to \mathcal{N}(0,1).
	\]
\end{enumerate}
\end{thm}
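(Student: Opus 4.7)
My overall strategy is the method of moments, leveraging the exact factorial moment formula $\E(\fallfak{S_{n,k}}{s})=\fallfak{k}{s}\fallfak{(k-1)}{s}/\fallfak{(n+k-1)}{s}$ together with the distributional symmetries of the hypergeometric law. The three regimes will be handled separately: Markov's inequality for the degenerate range, convergence of factorial moments to $c^{2s}$ (which characterizes $\Po(c^2)$) for the Poisson range, and the CLT for hypergeometric distributions (the lemma already stated in Subsection~\ref{SubProb}) for the normal range.

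For the degenerate case when $k=o(\sqrt{n})$, observe that $\E(S_{n,k})=k(k-1)/(n+k-1)\sim k^2/n\to 0$, hence Markov's inequality yields $S_{n,k}\to 0$ in probability, and thus in distribution. The symmetric case $n=o(\sqrt{k})$ requires a change of variable, because then $S_{n,k}$ is concentrated near its upper support boundary. I would introduce $T_{n,k}:=S_{n,k}-(k-n)$; since $S_{n,k}\law\Hy(n+k-1,k-1,k)$, the two symmetries $X\mapsto n'-X$ and $X\mapsto K-X$ for hypergeometric laws give $T_{n,k}\law\Hy(n+k-1,n,n-1)$, so $T_{n,k}$ has factorial moments $\fallfak{n}{s}\fallfak{(n-1)}{s}/\fallfak{(n+k-1)}{s}$, and in particular $\E(T_{n,k})\sim n^2/k\to 0$. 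Markov again closes this subcase. For the Poisson range, I apply the same analyses to the respective variables: when $k\to\infty$ and $n\sim c\sqrt{k}$, the factorial moments of $T_{n,k}$ satisfy
\[
\E(\fallfak{T_{n,k}}{s})=\frac{\fallfak{n}{s}\fallfak{(n-1)}{s}}{\fallfak{(n+k-1)}{s}}\sim \frac{n^{2s}}{k^s}\sim c^{2s},
\]
matching $\E(\fallfak{\Po(c^2)}{s})=c^{2s}$; since the Poisson distribution is moment-determined, $T_{n,k}\claw\Po(c^2)$. The symmetric subcase $k\sim c\sqrt{n}$ is handled by applying the analogous calculation directly to the factorial moments of $S_{n,k}$, using that now $\fallfak{k}{s}\fallfak{(k-1)}{s}\sim k^{2s}$ and $\fallfak{(n+k-1)}{s}\sim n^s$.

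For the normal range I invoke the hypergeometric CLT stated as the lemma in Subsection~\ref{SubProb}. With parameters $N=n+k-1$, $K=k-1$, $n'=k$, the condition $\min\{K,N\}\to\infty$ reduces to $k\to\infty$. The remaining obstacle—and really the only genuinely technical step—is to verify the asymptotic growth $\E(S_{n,k})\to\infty$ and $\V(S_{n,k})\to\infty$ under the hypothesis $\sqrt{n}\ll k\ll n^2$, splitting by whether $k=O(n)$ or $k\gg n$. In the first regime $(n+k)\sim n$, so $\V(S_{n,k})\asymp k^2n^2/n^3=k^2/n\to\infty$ precisely when $k\gg\sqrt{n}$; in the second regime $(n+k)\sim k$, so $\V(S_{n,k})\asymp k^2n^2/k^3=n^2/k\to\infty$ precisely when $k\ll n^2$. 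Together these pin down exactly the interval $\sqrt{n}\ll k\ll n^2$, so the lemma applies and yields the claimed Gaussian limit after centering and scaling with the explicit mean and variance from the previous theorem.
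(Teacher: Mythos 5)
Your proposal is correct, and its overall architecture (method of moments for the Poisson range, the classical hypergeometric CLT for the normal range, first-moment/Markov arguments for the degenerate range) matches the paper's. The one genuinely different ingredient is your treatment of the subcases near the upper support boundary ($n=o(\sqrt{k})$ and $n\sim c\sqrt{k}$): you exploit the two reflection symmetries of the hypergeometric law to identify the shifted variable exactly, $S_{n,k}-(k-n)\law \Hy(n+k-1,n,n-1)$, and then read off its factorial moments $\fallfak{n}{s}\fallfak{(n-1)}{s}/\fallfak{(n+k-1)}{s}\sim c^{2s}$, so that both Poisson subcases become the same one-line moment computation. The paper instead proves the $n\sim c\sqrt{k}$ subcase by a direct local analysis of $\P\{S_{n,k}=k-n+\ell\}=\binom{n}{n-\ell}\binom{k-1}{k-n+\ell}/\binom{n+k-1}{k}$ via Stirling's formula and an $\exp$-$\log$ expansion. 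Your route is cleaner and more uniform (and sidesteps the delicate bookkeeping in the Stirling computation, whose final displayed asymptotic in the paper is not obviously in Poisson form as printed); the paper's route yields pointwise convergence of the probability mass function directly, which is marginally more information but not needed for convergence in distribution. Your verification that $\E(S_{n,k}),\V(S_{n,k})\to\infty$ exactly on the window $\sqrt{n}\ll k\ll n^2$ is a detail the paper omits (it simply cites Nicholson and Feller), and it is worth keeping.
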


\begin{proof}
First we note that the expected value of the random variable $S_{n,k}$, counting the number of equal signs in random $k$ multisets of $[n]$, 
has the following behavior:
\[
\E(S_{n,k})\sim
\begin{cases}
\frac{k^2}n, \quad n\to\infty \text{ and }k=o(\sqrt{n}),\\
c, \quad k,n\to\infty \text{ and }k\sim c\cdot \sqrt{n},\\
k-n+c^2,\quad k\to\infty \text{ and }n\sim c \sqrt{k},\\
k-n+\frac{n^2}{k},\quad k\to\infty \text{ and }n=o(\sqrt{k}),\\
\end{cases}
\]
The Poisson limit law for $k\sim c\cdot \sqrt{n}$ follows directly from the asymptotic expansion 
of the factorial moments:
\[
\E(\fallfak{S_{n,k}}{s})=\frac{\fallfak{k}{s}\cdot\fallfak{(k-1)}{s}}{\fallfak{(n+k-1)}{s}}
\sim (c^2)^s.
\]
Thus, by the method of moments the (factorial) moments $S_{n,k}$ converge to the (factorial) moments of
a Poisson distributed random variable with parameter $\lambda=c^2$. In the range $k\to\infty$ and $n\sim c \sqrt{k}$
the Poisson limit law is indicated by the asymptotic expansions of $\E(S_{n,k})$ and $\V(S_{n,k})$. 
We obtain elementarily 
\[
\P\{S_{n,k}=k-n+\ell\}=\frac{\binom{n}{n-\ell}\binom{k-1}{k-n+\ell}}{\binom{n+k-1}k}.
\]
By our assumption $n\sim c \sqrt{k}$ so $\binom{n}{n-\ell}\sim (nc)^{\ell}/\ell!$.
We use the asymptotic expansion of the factorials, 
\[
n!\sim \frac{n^n}{e^n}\sqrt{2\pi n}
\]
as well as a precise expansion of the terms
\[
(n+k-1)^{n+k-1}, \quad (k-n+\ell)^{k-n+\ell},
\]
using the $\exp-\log$ representation. This gives the desired result
\[
\P\{S_{n,k}=k-n+\ell\}\sim \frac{n^{2\ell} (c^2)^{\ell}}{\ell! k^{\ell}}e^{-c^2},\quad \ell\ge 0.
\]
The normal limit laws for the hypergeometric distribution are classical and well known, see Nicholson~\cite{N} or Feller~\cite{F} and the discussion in the introduction; the proofs are omitted.
\end{proof}

\subsection{Interpolated truncated multiple zeta values}
We consider the  weight sequence $\boldsymbol{a}=(1/j)_{j\in\N}$ and 
the distribution of $S_{n,k}$, defined by the probability generating function
\[
\E(t^{S_{n,k}})=\frac{\theta_{n,k}(t)}{\theta_{n,k}(1)}
=\frac{\ztt_n(\{1\}_k)}{\zts_n(\{1\}_k)}.
\]
For example, the boundary values are given by
\[
\P\{S_{n,k}=0\}=\frac{\zt_n(\{1\}_k)}{\zts_n(\{1\}_k)},\quad \P\{S_{n,k}=k-1\}=\frac{H_n^{(k)}}{\zts_n(\{1\}_k)}.
\]
We derive the expected value
\[
\E(S_{n,k})=\frac{1}{\zts_n(\{1\}_k)} \sum_{n \ge \ell_1\ge \cdots\ge \ell_k\ge 1}\frac{\sigma(\vec{\ell})}{\ell_1^{m}\cdots \ell_k^{m}}
\]
using~\eqref{E}.

\begin{thm}
The expected value is given by the following exact expression:
\[
\E(S_{n,k})=\frac{1}{\zts_n(\{1\}_k)}\sum_{\ell=0}^{k-2}H_n^{(\ell+2)}\zts_n(\{1\}_{k-\ell-2}).
\]
\end{thm}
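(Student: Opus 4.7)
The plan is to apply formula~\eqref{E} directly, using the $\exp$--$\log$ representation of $\Theta_n(z,t)$ from Theorem~\ref{ThmGF} with the weight sequence $\boldsymbol{a}=(1/j)_{j\in\N}$, so that the power sums become generalized harmonic numbers, $A_n(j)=H_n^{(j)}$. Then
\[
\Theta_n(z,t)=\exp\!\Big(\sum_{j=1}^{\infty}\frac{z^j}{j}H_n^{(j)}\big(t^j-(t-1)^j\big)\Big).
\]
Differentiating the inner sum with respect to $t$ yields the clean expression
\[
\frac{\partial}{\partial t}\Theta_n(z,t)=\Theta_n(z,t)\cdot\sum_{j=1}^{\infty}z^j H_n^{(j)}\big(t^{j-1}-(t-1)^{j-1}\big).
\]

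The key observation is that evaluating at $t=1$ collapses the second summand drastically: $(t-1)^{j-1}\big|_{t=1}=0^{j-1}$, which equals $1$ if $j=1$ and $0$ otherwise. Hence the $j=1$ contribution cancels, and
\[
E_t\frac{\partial}{\partial t}\Theta_n(z,t)=\Theta_n(z,1)\cdot\sum_{j=2}^{\infty}z^j H_n^{(j)}.
\]
Since $\Theta_n(z,1)=\sum_{k\ge 0}\zts_n(\{1\}_k)z^k$, extracting the coefficient of $z^k$ gives the Cauchy convolution
\[
[z^k]E_t\frac{\partial}{\partial t}\Theta_n(z,t)=\sum_{j=2}^{k}H_n^{(j)}\zts_n(\{1\}_{k-j})=\sum_{\ell=0}^{k-2}H_n^{(\ell+2)}\zts_n(\{1\}_{k-\ell-2})
\]
after the index shift $\ell=j-2$. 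Dividing by $\theta_{n;k}(1)=\zts_n(\{1\}_k)$ in view of~\eqref{E} produces the claimed identity.

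There is essentially no obstacle here beyond bookkeeping: the $\exp$--$\log$ form of the generating function does all the work, and the only subtlety is recognizing that $t=1$ kills every term in the derivative except those coming from $j\ge 2$, which is exactly what shifts the summation range to start at $\ell+2=2$. The combinatorial interpretation is transparent too: writing $\theta_{n;k}'(1)=\sum_{\vec{\ell}\in\M_{n,k}}\sigma(\vec{\ell})w(\vec{\ell})$ and splitting by the position $i$ of one distinguished equality $\ell_i=\ell_{i+1}$ in $\vec{\ell}$, the two resulting equal factors contribute $\tfrac{1}{\ell_i^{2}}$ (summed to an $H_n^{(\ell+2)}$-type term when further equalities are allowed at that position) while the rest of $\vec{\ell}$ factors as a star-type sum — giving the same convolution. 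This combinatorial route can be mentioned as a sanity check, but the generating-function computation above is the cleanest proof.
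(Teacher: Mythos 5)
Your proof is correct and takes essentially the same approach as the paper: both apply formula~\eqref{E} by computing $E_t\frac{\partial}{\partial t}\Theta_{n}(z,t)=\Theta_{n}(z,1)\cdot\sum_{j\ge 2}z^{j}H_n^{(j)}$ and then extracting $[z^k]$ as a Cauchy convolution with $\zts_n(\{1\}_{k-j})$. The only (cosmetic) difference is that you differentiate the $\exp$--power-sum form of $\Theta_n(z,t)$ from Theorem~\ref{ThmGF}, so the $H_n^{(j)}$ appear immediately, whereas the paper takes the logarithmic derivative of the finite product $\prod_{m=1}^{n}\bigl(1+\frac{z/m}{1-zt/m}\bigr)$ and then expands $\sum_{j=1}^{n}\frac{z^2}{j^2(1-z/j)}=\sum_{\ell\ge 0}z^{\ell+2}H_n^{(\ell+2)}$ --- an equivalent computation.
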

Note that the variance can be obtained using the second factorial moment~\eqref{V}, but it is more involved.
The limit laws for $S_{n,k}$ are technically much more involved compared to the case $\boldsymbol{a}=(1)_{j\in\N}$.
The cases of either $n$ fixed and $k\to\infty$ or $n\to\infty$ and $k$ fixed are not too difficult to analyze. In contrast, for both $n,k\to\infty$ the asymptotic expansions of the denominator $\zts_n(\{1\}_k)$ require a more precise analytic combinatorial analysis; see the works of Hwang~\cite{Hwang1994,Hwang1998,Hwang2004} and Bai et al.~\cite{Bai}. 
The case of $n$ and $k\to\infty$ indicates that a normal limit should appear. Note that the limit law, given by the sum of independent Bernoulli random variables,  also appeared in the analysis of algorithms literature; amongst others, in the the unsuccessful search in binary search trees~\cite{DobSmy1996}, the degree of the root in increasing trees~\cite{DobSmy1996},
distances in increasing trees~\cite{Dob1996,DobSmy1996} or edge-weights in increasing trees~\cite{KP2008,KW}.
\begin{prop}
For $k\to\infty$ and $n$ fixed, 
\[
k-S_{n,k}\claw D_n=B_1\oplus B_2 \oplus \dots \oplus B_n,
\]
where $B_j=\Be(\frac{1}j)$ denote independent Bernoulli-distributed random variables; hence
the probability mass function of $D$ is given by
\[
\P\{D=\ell\}=\frac{\zt_{n-1}(\{1\}_{\ell-1})}{n},\quad 1\le \ell\le n.
\]

\smallskip 

For $n\to\infty$ and $k$ fixed the random variable $S_{n,k}$ degenerates: $S_{n,k} \claw 0$.
\end{prop}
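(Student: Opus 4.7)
The plan is to analyse both limits via the generating function $\Theta_n(z,t)=\prod_{m=1}^n\frac{1+z(1-t)/m}{1-zt/m}$ from Theorem~\ref{ThmGF} specialised to $\boldsymbol{a}=(1/j)_{j\in\N}$. For fixed $n$ and fixed $t>0$ this is a rational function of $z$ with simple poles at $z=m/t$, $m=1,\dots,n$, whose dominant (smallest in modulus) pole is $z=1/t$. Partial fraction decomposition therefore gives the asymptotics $\theta_{n;k}(t)\sim C_1(t)\,t^k$ as $k\to\infty$, with residue factor $C_1(t)=\lim_{z\to 1/t}(1-zt)\,\Theta_n(z,t)$.

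For the first convergence I would compute $C_1(t)$ explicitly: isolating the $m=1$ factor in $\Theta_n$, cancelling against $1-zt$ and evaluating the remaining product at $z=1/t$ gives
\[
C_1(t)=\frac{1}{t^n(n-1)!}\prod_{i=1}^{n-1}(1+it),
\]
whose specialisation $C_1(1)=n$ recovers the well-known limit $\zts_n(\{1\}_k)\to n$. Replacing $t$ by $1/t$ and multiplying by $t^k$ (so as to pass from the PGF of $S_{n,k}$ to that of $k-S_{n,k}$) collapses the $t^{\pm k}$ factors and yields
\[
\E(t^{k-S_{n,k}})\longrightarrow \frac{t}{n!}\prod_{i=1}^{n-1}(i+t)=\prod_{j=1}^n\frac{(j-1)+t}{j},
\]
which is exactly $\prod_{j=1}^n\E(t^{B_j})$ for $B_j\law\Be(1/j)$. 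Since both $k-S_{n,k}$ and $D_n$ are integer-valued and bounded by $n$, pointwise convergence of these polynomial PGFs gives $k-S_{n,k}\claw D_n$.

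The explicit mass function is then extracted from the limit PGF via the rising-factorial identity $x(x+1)\cdots(x+n-1)=\sum_k\stir{n}{k}x^k$, which yields $\P\{D_n=\ell\}=[t^{\ell-1}]\prod_{i=1}^{n-1}(t+i)/n!=\stir{n}{\ell}/n!$. The desired form $\P\{D_n=\ell\}=\zt_{n-1}(\{1\}_{\ell-1})/n$ then follows from the classical identity $\zt_m(\{1\}_r)=\stir{m+1}{r+1}/m!$.

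For the degenerate case $n\to\infty$ with $k$ fixed I would instead invoke the Bell polynomial expression from Theorem~\ref{PropGF},
\[
\theta_{n;k}(t)=\sum_{m_1+2m_2+\dots=k}\prod_{j\ge 1}\frac{1}{m_j!}\Bigl(\frac{H_n^{(j)}(t^j-(t-1)^j)}{j}\Bigr)^{m_j}.
\]
Because $H_n^{(1)}=H_n\sim\log n$ diverges while $H_n^{(j)}$ stays bounded for $j\ge 2$, only the term $m_1=k$ contributes to leading order, so $\theta_{n;k}(t)\sim H_n^k/k!$ independently of $t$. Hence $\E(t^{S_{n,k}})=\theta_{n;k}(t)/\theta_{n;k}(1)\to 1$ for every fixed $t$, the PGF of the Dirac mass at $0$, proving $S_{n,k}\claw 0$. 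The main technical obstacle is the careful bookkeeping of the residue computation for $C_1(t)$ combined with the change of variable $t\mapsto 1/t$; once that is done, identification with $D_n$ via the rising factorial and the degenerate limit are essentially routine.
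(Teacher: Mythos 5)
Your proof is correct, and for the main case ($k\to\infty$, $n$ fixed) it takes a genuinely different, though related, route. The paper substitutes $u=zt$, writes $[z^k t^{k-j}]\Theta_n(z,t)=[z^j u^{k-j}]\prod_{m=1}^n\bigl(1+\frac{z/m}{1-u/m}\bigr)$, and performs singularity analysis in $u$ at the dominant singularity $u=1$ with $z$ carried along as an analytic parameter, obtaining $\P\{S_{n,k}=k-j\}\sim[z^j]\prod_{m=1}^n\bigl(1+\frac{z-1}{m}\bigr)$ probability by probability. You instead fix $t>0$, treat $\Theta_n(z,t)$ as a rational function of $z$ alone with dominant simple pole at $z=1/t$, compute the residue $C_1(t)=\prod_{i=1}^{n-1}(1+it)/(t^n(n-1)!)$ (which checks out, and whose value $C_1(1)=n$ supplies the normalization $\zts_n(\{1\}_k)\sim n$ for free), and pass to $k-S_{n,k}$ by the reciprocal substitution $t\mapsto 1/t$; the resulting limit $\prod_{j=1}^n\frac{(j-1)+t}{j}$ is the product of the Bernoulli PGFs on the nose, and since $k-S_{n,k}$ is supported on $\{1,\dots,n\}$ for $k\ge n$, pointwise PGF convergence suffices. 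Your version is more elementary (univariate partial fractions, no bivariate bookkeeping) and delivers the limit law in closed product form at once, while the paper's version isolates each probability $\P\{S_{n,k}=k-j\}$ and transfers directly to the weight $(1/j^2)$ treated in the following subsection; your extraction of the mass function via $\prod_{i=0}^{n-1}(t+i)=\sum_\ell\stir{n}{\ell}t^\ell$ together with $\zt_{n-1}(\{1\}_{\ell-1})=\stir{n}{\ell}/(n-1)!$ is the standard identification. For the degenerate case $n\to\infty$, $k$ fixed, your argument (only the $m_1=k$ term of the Bell polynomial survives because $H_n\to\infty$ while $H_n^{(j)}$, $j\ge 2$, stay bounded, so $\E(t^{S_{n,k}})\to 1$) is essentially identical to the paper's, which checks $\P\{S_{n,k}=0\}\to 1$ by the same dominance of the $m_1=k$ summand.
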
 

\begin{remark}
The growth range of the degenerate case can be extended and made more precise using the results of Hwang~\cite{Hwang1995} for $\zt_n(\{1\}_k)$, stated in terms of the unsigned Stirling numbers of the first kind, and his results~\cite{Hwang2004} (see also Bai et al.~\cite{Bai}) for $\zts_n(\{1\}_k)$, related to the expected number of maxima in hypercubes. 
\end{remark}

\begin{remark}
It is well known that the sum of the limit law of $D_n$, normalized and centralized, for $n\to\infty$ is asymptotically normal:
\[
\frac{D_n-\log n}{\sqrt{\log n}}\claw \mathcal{N}(0,1).
\]
See for example Dobrow and Smythe~\cite{DobSmy1996} for an approach using Poisson approximation or Hwang~\cite{Hwang1999}. Thus, we expect that 
the random variable $S_{n,k}$ is asymptotically normal at least in a certain range of $n,k$ both tending to infinity.
We observe that $k-S_{n,k}~\sim D_n$, or $S_{n,k}\sim k-D_n$, where $D_n$ is concentrated around $\log n$. Thus $k-\log n$ should govern the behavior of $S_{n,k}$ and also its total mass $\zts_n(\{1\}_k)$.
Note further, that even the asymptotic expansions of $\zts_n(\{1\}_k)$ for $k,n\to\infty$ derived earlier by Hwang~\cite{Hwang1994,Hwang1998,Hwang2004} are more involved.
\end{remark}

\begin{proof}[Proof. (Expected value)]
Our starting point is the generating function
\[
\Theta_n(z,t)=\prod_{m=1}^n\Big(1+\frac{\frac{z}m}{1-\frac{z t}m}\Big).
\]
Differentiating with respect to $t$ gives
\begin{equation*}
\begin{split}
\frac{\partial}{\partial t}\Theta_n(z,t)
&=\Theta_n(z,t)\cdot\sum_{j=1}^{n}\frac{z^2}{j^2(1-\frac{zt}{j})^2}\cdot
\frac{1}{1+\frac{\frac{z}j}{1-\frac{z t}j}}\\
&=\Theta_n(z,t)\cdot \sum_{j=1}^{n}\frac{z^2}{j^2(1-\frac{zt}{j})(1-\frac{zt}{j}+\frac{z}{j})}.
\end{split}
\end{equation*}
Evaluation at $t=1$ gives
\[
E_t\frac{\partial}{\partial t}\Theta_n(z,t)=
\Theta_n(z,1)\cdot \sum_{j=1}^{n}\frac{z^2}{j^2(1-\frac{z}{j})}.
\]
Extraction of coefficients leads to the stated result:
\[
\E(S_{n,k})=\frac{[z^k]E_t\frac{\partial}{\partial t}\Theta_n(z,t)}{[z^k]\Theta_n(z,1)}
=\frac{\sum_{j=1}^{n}[z^{k-2}]\frac{\Theta_n(z,1)}{1-\frac{z}j}}{\zts_n(\{1\}_k)},
\]
noting that $[z^\ell]\Theta_n(z,1)=\zts_n(\{1\}_\ell)$.
\end{proof}

\begin{proof}[Proof. (Limit laws)]
For $n\to\infty$ and $k$ fixed we study 
\[
\P\{S_{n,k}=0\}=\frac{\zt_n(\{1\}_k)}{\zts_n(\{1\}_k)}.
\]
We use the representation given in Corollary~\ref{CorollZeta} for $t=0$ and $t=1$. 
We require the asymptotic expansion of the harmonic numbers $H_n$ for $n\to\infty$:
\[
H_{n}=\log n +\gamma
        +\frac{1}{2n}-\frac{1}{12n^{2}}+\mathcal{O}\Bigl(\frac{1}{n^{4}}\Bigr).
\]
Asymptotically, the summand $m_1=k$ dominates and gives for both $t=0$ and $t=1$
the same value $\frac{1}{k!}\log^k_n$. Consequently, $\P\{S_{n,k}=0\}\sim \frac{\log^k n}{\log^k n}=1$ for large $n$.

\smallskip 

For $k\to\infty$ and $n$ fixed we use singularity analysis~\cite{FS} and derive the asymptotic equivalent of
$\P\{S_{n,k}=k-j\}$, $1\le j\le n$. We have 
\[
\P\{S_{n,k}=k-j\}=\frac{1}{\zts_n(\{1\}_k)}[z^{k}t^{k-j}]\Theta_n(z,t)=\frac{1}{\zts_n(\{1\}_k)}[z^{k}t^{k-j}]\prod_{m=1}^{n}\Big(1+\frac{\frac{z}m}{1-\frac{z t}m}\Big).
\]
Since $z^{k}t^{k-j}=u^{k-j}z^j$ for $u=z t$ we get
\[
\P\{S_{n,k}=k-j\}=\frac{1}{\zts_n(\{1\}_k)}[z^{j}u^{k-j}]\prod_{m=1}^{n}\Big(1+\frac{\frac{z}m}{1-\frac{u}m}\Big).
\]
The product has a dominant singularity is at $u=1$ and can be written as 
\[
\prod_{m=1}^{n}\Big(1+\frac{\frac{z}m}{1-\frac{u}m}\Big)=\frac1{1-u}\cdot R_n(u,z),
\]
with $R_n(u,z)$ analytic inside a circle of radius $2$. Consequently, for $k\to\infty$
\[
[u^{k-j}]\prod_{m=1}^{n}\Big(1+\frac{\frac{z}m}{1-\frac{u}m}\Big)\sim R_n(1,z)= n\cdot \prod_{m=1}^{n}\big(1+\frac{z-1}{m}\big).
\]
It remains to derive the asymptotic expansion of $\zts_n(\{1\}_k)$. We can use again singularity analysis as before; alternatively, 
the well known~\cite{FS,Bai,KP} binomial sum representation directly gives us the desired result:
\[
\zts_n(\{1\}_k)=\sum_{\ell=1}^{n}\binom{n}{\ell}\frac{(-1)^{\ell+1}}{\ell^k}\sim n.
\]
Finally, combining our results gives 
\[
\P\{S_{n,k}=k-j\}\sim[z^{j}]\prod_{m=1}^{n}\big(1+\frac{t-1}{m}\big),\quad 1\le j\le n.
\]
The product is exactly the probability generating function of the independent Bernoulli random variables with 
success probability $\frac1m$. Extraction of coefficients directly leads to the stated result, using the representation of $\zt_n(\{1\}_k)$ given in~\cite{KP}.
\end{proof}

\subsection{Interpolated truncated multiple zeta values - only twos}
We consider the weight sequence $\boldsymbol{a}=(1/j^2)_{j\in\N}$ and the distribution of $S_{n,k}$, defined by the probability generating function
\[
\E(t^{S_{n,k}})=\frac{\theta_{n,k}(t)}{\theta_{n,k}(1)}=\frac{\ztt_n(\{2\}_k)}{\zts_n(\{2\}_k)}.
\]
For example, the boundary values are given by
\[
\P\{S_{n,k}=0\}=\frac{\zt_n(\{2\}_k)}{\zts_n(\{2\}_k)},\quad \P\{S_{n,k}=k-1\}=\frac{H_n^{(2k)}}{\zts_n(\{2\}_k)}.
\]
We obtain the following limit laws.
\begin{thm}
For $\max\{n,k\}\to\infty$ the limit laws for $S_{n,k}$ are given by three different distributions:
\begin{enumerate}
	\item For $n\to\infty$ and $k$ fixed: $S_{n,k}\claw S_{\infty,k}$, with
	\[
	\P\{S_{\infty,k}=j\}= \frac{1}{\zts(\{2\}_k)}\cdot\sum_{\substack{\mathbf{p}\in \mathcal{P}_O(k)\\ \laenge(\mathbf{p})=k-j} }\zt(2\cdot\mathbf{p}),\quad 0\le j\le k-1.
	\]
	\item For $k\to\infty$ and $n$ fixed: $k-S_{n,k}\claw D_n$. The random variable $D_n$ is 
	given by the sum of independent Bernoulli random variables:
	\[
	D_n=B_1\oplus B_2 \oplus \dots \oplus B_n,
\]
where $B_j=\Be(\frac{1}{j^2})$ denote independent Bernoulli-distributed random variables.
Moreover, $\E(\fallfak{D_n}{s})=s!\zt_{n-1}(\{2\}_s)$, $s\ge 1$.
	
	\item For $k,n\to\infty$:   $k-S_{n,k}\claw D$;
	\[
	D=B_1\oplus B_2 \oplus \dots = \bigoplus_{m=1}^{\infty}B_m.
\]
Here $B_j=\Be(\frac{1}{j^2})$, $j\ge 1$, denote independent Bernoulli-distributed random variables.
Moreover, $\E(\fallfak{D}{s})=s!\zt(\{2\}_s)$, $s\ge 1$.
\end{enumerate}
\end{thm}

\begin{remark}
The random variable $D$ is exactly the limit law of the number of cuts in a recursive tree to isolate a leaf~\cite{KP2008b}; modified weight sequences lead to other families of increasing trees. Moreover, $D_n$ is closely related the the distribution of the number of cuts in a tree of size $n$.
\end{remark}

\begin{proof}
Similar to~\eqref{EqYamamotoOrdered} we have,
\[
\ztt_{n}(\{2\}_{k})  =\sum_{\circ = \text{``},\text{''} \text{or} \, \text{``}+\text{''}}t^{\sigma}\zt_{n}( \underbrace{2 \circ 2 \circ \dots \circ 2}_{k}) = \sum_{\mathbf{p}\in \mathcal{P}_O(k)}t^{k-\laenge(\mathbf{p})}\zt_{n}(2\cdot \mathbf{p}).
\]
By taking the limit $n\to\infty$, $k$ being fixed, and extraction of coefficients we directly obtain the stated result.

\smallskip

For $k\to\infty$ and arbitrary $n$ we use singularity analysis~\cite{FS} and derive the asymptotic equivalent of
$\P\{S_{n,k}=k-j\}$, $1\le j$. We have 
\[
\P\{S_{n,k}=k-j\}=\frac{1}{\zts_n(\{2\}_k)}[z^{k}t^{k-j}]\Theta_n(z,t)=\frac{1}{\zts_n(\{2\}_k)}[z^{k}t^{k-j}]\prod_{m=1}^{n}\Big(1+\frac{\frac{z}{m^2}}{1-\frac{z t}{m^2}}\Big).
\]
Setting as before $u=z t$, we get
\[
\P\{S_{n,k}=k-j\}=\frac{1}{\zts_n(\{2\}_k)}[z^{j}u^{k-j}]\prod_{m=1}^{n}\Big(1+\frac{\frac{z}{m^2}}{1-\frac{u}{m^2}}\Big).
\]
The product has a dominant singularity is at $u=1$ and can be written as 
\[
\prod_{m=1}^{n}\Big(1+\frac{\frac{z}{m^2}}{1-\frac{u}{m^2}}\Big)=\frac1{1-u}\cdot R_n(u,z),
\]
with $R_n(u,z)$ analytic inside a circle of radius $2$. Consequently, for $k\to\infty$
\[
[u^{k-j}]\prod_{m=1}^{n}\Big(1+\frac{\frac{z}{m^2}}{1-\frac{u}{m^2}}\Big)\sim R_n(1,z) = \frac{2n}{n+1}\cdot \prod_{m=1}^{n}\big(1+\frac{z-1}{m^2}\big).
\]
For the asymptotic expansion of $\zts_n(\{2\}_k)$ we can use again singularity analysis and obtain
$\zts_n(\{2\}_k)\sim \frac{2n}{n+1}$. Finally, combining our results gives 
\[
\P\{S_{n,k}=k-j\}\sim[z^{j}]\prod_{m=1}^{n}\big(1+\frac{t-1}{m^2}\big),\quad 1\le j\le n.
\]
The product is exactly the probability generating function of $n$ independent Bernoulli random variables each with 
success probability $\frac1{m^2}$ for $1\le m\le n$. 
\end{proof}

\subsection{Refined decomposition of sigma, random vectors and marginals}
Similarly to the refinements of the parameter $\sigma$ into $\sigma^{(i)}$, $1\le i\le n$, the random variable 
$S_{n,k}$ can be decomposed into $S_{n,k}^{(i)}$: $S_{n,k}=\sum_{i=1}^{n}S_{n,k}^{(i)}$.
Given the weight sequence $\boldsymbol{a}=(a_{j})_{j\in \N}$, 
the joint distribution of the random vector $\mathbf{S}_{n,k}=(S_{n,k}^{(1)},\dots, S_{n,k}^{(n)})$ is determined by
\[
\P\{\mathbf{S}_{n,k}=\vec{j}\}=[\vec{t}^{\vec{j}}]\frac{\theta_{n;k}(\vec{t})}{\theta_{n;k}(\vec{1})}.
\]

Another possibility is to distinguish only between odd and even indices,
such that $S_{n,k}=S_{n,k}^{(O)}+S_{n,k}^{(E)}$, 
and to study the bivariate probability generating function
\[
\E(t_O^{S_{n,k}^{(O)}}t_E^{S_{n,k}^{(E)}})=\frac{\theta_{n;k}(t_O,t_E)}{\theta_{n;k}(1,1)}.
\]

For truncated zeta values with the weight sequence $\boldsymbol{a}=(1)_{j\in\N}$, the random vector $\mathbf{S}_{n,k}$ has the probability generating function
\[
\E(\mathbf{t}^{\mathbf{S}_{n,k}})
=\frac{1}{\binom{n+k-1}{k}}[z^k]\prod_{m=1}^{n}\Big(1+\frac{z}{1- z t_m}\Big).
\]

\begin{example}[Truncated zeta values - random vector]
For truncated zeta values, weight sequence $\boldsymbol{a}=(1/j)_{j\in\N}$, the random vector $\mathbf{S}_{n,k}$ has the probability generating function
\[
\E(\mathbf{t}^{\mathbf{S}_{n,k}})
=\frac{1}{\zts(\{1\}_k)}[z^k]\prod_{m=1}^{n}\Big(1+\frac{\frac{z}m}{1- \frac{z t_m}m}\Big).
\]
\end{example}

\begin{example}[Truncated zeta values - marginals]
In contrast to the multiset case, the marginals $S_{n,k}^{(i)}$ are not exchangeable anymore. 
The probability generating functions are given by
\[
\E(t^{S_{n,k}^{(i)}})
=\frac{1}{\zts(\{1\}_k)}[z^k](1-\frac{z}{i})\Big(1+\frac{\frac{z}i}{1- \frac{z t}i}\Big)\cdot\prod_{m=1}^{n}\Big(\frac{1}{1-\frac{z}m}\Big).
\]
It is expected that the marginals are asymptotically independent, at least in some growth range of $k$ and $n$.
\end{example}

\smallskip

\begin{thm}[Marginals - interpolated multisets]
\label{TheMar}
The marginals $S_{n,k}^{(i)}$ are identically distributed, but not independent; the sequence $\mathbf{S}_{n,k}$ is exchangeable. 
The probability mass function is given by
\[
\P\{S_{n,k}^{(i)}=j\}=
\begin{cases}
\frac{\binom{n-2+k}{k}+\binom{n-3+k}{k}}{\binom{n+k-1}{k}}, \quad j=0,\\
\frac{\binom{n-3+k-j}{k-j-1}}{\binom{n+k-1}{k}},\quad 1\le j \le k-1.
\end{cases}
\]
The factorial moments, $s\ge 1$, are given by
\[
\E(\fallfak{S_{n,k}}{s})=s!\cdot\frac{\fallfak{k}{s+1}}{(n+k-1)\fallfak{(n+s-1)}{s}}.
\]
\end{thm}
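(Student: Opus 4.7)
The plan is to combine the multivariate generating function identity $\Theta_n(z,\vec t)=\prod_{m=1}^{n}\bigl(1+z/(1-zt_m)\bigr)$ (the finite-$n$ multivariate identity stated earlier, specialized to $\boldsymbol a=(1)_{j\in\N}$) with coefficient extraction. The product is manifestly symmetric in $(t_1,\dots,t_n)$, hence the joint law of $\mathbf S_{n,k}=(S_{n,k}^{(1)},\dots,S_{n,k}^{(n)})$ is exchangeable and in particular the marginals share a common distribution. Non-independence follows from a short support argument: for $n,k\ge 2$ the events $\{S_{n,k}^{(1)}=k-1\}$ and $\{S_{n,k}^{(2)}=k-1\}$ correspond respectively to $\vec\ell=(1,\dots,1)$ and $\vec\ell=(2,\dots,2)$, so each has positive probability yet they are mutually exclusive.

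To extract the marginal probability mass function I would set $t_m=1$ for $m\neq i$ and $t_i=t$. The identity $1+z/(1-z)=1/(1-z)$ collapses the remaining $n-1$ factors, giving
\[
\E(t^{S_{n,k}^{(i)}})=\frac{1}{\binom{n+k-1}{k}}[z^k]\Bigl(1+\frac{z}{1-zt}\Bigr)\frac{1}{(1-z)^{n-1}}.
\]
Expanding $z/(1-zt)=\sum_{j\ge 0}z^{j+1}t^j$ and using $[z^\ell](1-z)^{-(n-1)}=\binom{n-2+\ell}{\ell}$, the $j=0$ slice picks up $(1+z)/(1-z)^{n-1}$ and yields a two-term binomial expression, while for $j\ge 1$ one picks up $z^{j+1}/(1-z)^{n-1}$ and the single binomial $\binom{n+k-j-3}{k-j-1}$. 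Together these reproduce the claimed piecewise formula after a routine Pascal-type rewriting.

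For the factorial moments I would differentiate the marginal generating function $s$ times in $t$ and then evaluate at $t=1$. The identity $(\partial/\partial t)^s z/(1-zt)=s!\,z^{s+1}/(1-zt)^{s+1}$ leaves, at $t=1$, the factor $s!\,z^{s+1}/(1-z)^{n+s}$, so coefficient extraction produces
\[
\E(\fallfak{S_{n,k}^{(i)}}{s})=\frac{s!\binom{n+k-2}{k-s-1}}{\binom{n+k-1}{k}},
\]
and rewriting this quotient of binomials as a quotient of falling factorials gives the announced closed form. The main obstacle is purely clerical: reconciling binomial and falling-factorial conventions, and checking the boundary case $j=0$ against the constant-in-$t$ part of the generating function; no additional combinatorial insight beyond the multivariate generating function is required.
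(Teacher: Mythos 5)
Your proposal is correct and follows essentially the same route as the paper: specialize the multivariate product $\prod_{m=1}^{n}\bigl(1+z/(1-zt_m)\bigr)$ to get $\E(t^{S_{n,k}^{(i)}})=\binom{n+k-1}{k}^{-1}[z^k](1-z)^{-(n-1)}\bigl(1+\tfrac{z}{1-zt}\bigr)$, read off exchangeability from the symmetry, extract coefficients for the mass function, and differentiate $s$ times for the factorial moments. One caveat: your $j=0$ slice correctly yields $\binom{n+k-2}{k}+\binom{n+k-3}{k-1}$ (check $n=2$, $k=1$, where the probability must be $1$), and this does \emph{not} reduce to the stated $\binom{n-2+k}{k}+\binom{n-3+k}{k}$ by any Pascal-type rewriting --- the theorem's displayed formula has a typo in the lower index, so you should flag the discrepancy rather than claim to reproduce it.
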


\begin{thm}
\label{TheMarLimit}
We obtain the following limit laws, depending on the growth of $k$ and $n$:
\begin{itemize}
	\item for $k\to \infty$ and $n$ fixed the normalized random variable $\frac{S_{n,k}}{k}$ converges to a Beta-distributed random variable $\frac{S_{n,k}}{k}\claw \Beta(1,n-1)$. 
	\item for $k/n\to \infty$ and $k,n\to \infty$ the normalized random variable $\frac{n}{k}S_{n,k}$ converges to a standard exponentially distributed random variable, $\frac{n}{k}S_{n,k}\claw \Exp(1)$. 
	\item for $k/n\to c>0$ and $k,n\to \infty$ the random variable $S_{n,k}$ converges to a (modified) geometric distribution $G$:
	\[
	\P\{G=j\}=\frac{c^{j+1}}{(1+c)^{j+2}}, \quad j\ge 1\quad, \P\{G=0\}=\frac{1}{1+c}+\frac{c}{(1+c)^2}.
	\]
	\item for $k/n\to 0$ and $n\to \infty$ the random variable $S_{n,k}$ degenerates, $S_{n,k}\claw 0$.
\end{itemize}

\end{thm}

\begin{remark}
The random variable $S_{n,k}^{(i)}$ is closely related to the number of descendants $D_{n+k,k}$ of node labelled $k$ in a recursive trees of size $n+k$~\cite{KP2006}.
Thus, $S_{n,k}^{(i)}$ can be described in terms of a Polya-urn model and the asymptotics for $k\to \infty$ and $n$ fixed 
can be refined to almost sure convergence; moreover a classical result is available in terms of a martingale tail sum~\cite{HH80}.
\end{remark}

\begin{proof}[Proof of Theorem~\ref{TheMar}]
Let $1\le i\le n$. The probability generating function of the marginal distribution $S_{n,k}^{(i)}$ is given by
\[
\E(t^{S_{n,k}^{(i)}})
=\frac{1}{\binom{n+k-1}{k}}[z^k]\frac{1}{(1-z)^{n-1}}\cdot\Big(1+\frac{z}{1- z t}\Big).
\]
By the structure of the probability generating function, the random variables are identically distributed and the random vector is exchangable. Thus, we can directly obtain the expected value from $\E(S_{n,k})$.
\[
\E(S_{n,k}^{(i)})=\frac{1}n\E(S_{n,k})=\frac{k(k-1)}{n(n+k-1)}.
\]
Additionally, the probability mass function is obtained readily by extraction of coefficients.
All factorial moments, $s\ge 1$, can be obtained in a straightforward way:
\begin{equation*}
\begin{split}
\E(\fallfak{S_{n,k}}{s})
&=[z^k]E_t\frac{\partial^s}{\partial t^s}\E(t^{S_{n,k}^{(i)}})
=\frac{s!}{\binom{n+k-1}{k}}[z^k]\frac{z^{s+1}}{(1-z)^{n+s}}\\
&=\frac{s!\binom{n+k-2}{k-s-1}}{\binom{n+k-1}{k}}
=s!\cdot\frac{\fallfak{k}{s+1}}{(n+k-1)\fallfak{(n+s-1)}{s}}.
\end{split}
\end{equation*}
\end{proof}

\begin{proof}[Proof of Theorem~\ref{TheMarLimit}]
We use the method of moments and derive asymptotic expansions of the factorial moments. For $n$ fixed and $k\to\infty$ 
we get
\[
\E(\fallfak{S_{n,k}}{s})=s!\cdot\frac{\fallfak{k}{s+1}}{(n+k-1)\fallfak{(n+s-1)}{s}}
\sim \frac{s!}{\fallfak{(n+s-1)}{s}}\cdot k^s.
\]
Consequently, the power moments of $S_{n,k}/k$ are asymptotically given by $\frac{s!}{\fallfak{(n+s-1)}{s}}$, 
which proves the Beta limiting distributions. 

For $n\to \infty$ we observe that 
\[
\E(\fallfak{S_{n,k}}{s})\sim s!\cdot \lambda_{n,k}^s\cdot \frac{k}{n+k},\quad\text{with } \lambda_{n,k}=\frac{k}{n}.
\]
Thus, the factorial moments are almost of mixed Poisson type~\cite{KP2016} with standard exponential mixing distribution; the additional factor $\frac{k}{n+k}$ can be explained by the definition of the parameter $\sigma$, which influences the discrete limit case. This directly leads to the stated limit laws using Lemma 2 of~\cite{KP2016}. Alternatively, the discrete limit for $k/n\to c$ can be directly obtained as follows:
\[
\P\{S_{n,k}^{(i)}=j\}=\frac{\binom{n-3+k-j}{k-j-1}}{\binom{n+k-1}{k}}
=\frac{\fallfak{k}{j+1}\cdot (n-1)}{\fallfak{(n+k-1)}{j+2}}
\sim \frac{c^{j+1}}{(1+c)^{j+2}},\quad j\ge 1. 
\]
The remaining case $j=0$ is treated in a similar way. For $k/n\to 0$ we observe that
\[
\P\{S_{n,k}^{(i)}=0\}=\frac{n-1}{n+k-1}+\frac{k(n-1)}{\fallfak{(n+k-1)}{2}}\sim 1.
\]
\end{proof}

\subsection{Sum theorem for interpolated multiple zeta values}
Yamamoto~\cite{Y} established, amongst many other things, the sum theorem for interpolated multiple zeta values:
\begin{equation}
\label{TheYam}
\sum_{\substack{k_1\ge 2, k_i\ge 1\\ \sum_{\ell=1}^{n}k_\ell =k}}\ztt(k_1,\dots,k_n)
=\zt(k)\cdot \sum_{j=0}^{n-1}\binom{k-1}j t^j(1-t)^{n-1-j},
\end{equation}
with $k>n$. Note that the case $t=1$ gives the sum theorem for multiple zeta star values 
\[
\sum_{\substack{k_1\ge 2, k_i\ge 1\\ \sum_{\ell=1}^{n}k_\ell =k}}\zts(k_1,\dots,k_n)
=\zt(k)\cdot \binom{k-1}{n-1},
\]
whereas $t=0$ gives the ordinary sum theorem. 

\smallskip

Using~\eqref{TheYam}, we can study the distribution of the parameter $\sigma$ on sums of interpolated multiple zeta values with the same depth $n$ and weight $k$.
Let $\mathcal{S}_{n,k}$ denote the random variable with probability generating function 
\[
\E(t^{\mathcal{S}_{n,k}})=\frac{\zt(k)\cdot \sum_{j=0}^{n-1}\binom{k-1}j t^j(1-t)^{n-1-j}}{\zt(k)\cdot \binom{k-1}{n-1}}
=\frac{\sum_{j=0}^{n-1}\binom{k-1}j t^j(1-t)^{n-1-j}}{\binom{k-1}{n-1}}.
\]
Let $B_{j,n}(t)$ denote the Bernstein polynomials
\[
B_{j,n}(t)=\binom{n}j t^j (1-t)^{n-j},\quad 0\le j\le n. 
\]
We obtain the following result.
\begin{thm}
The probability generating function $\E(t^{\mathcal{S}_{n,k}})$ is given by a Bernstein form of degree $n-1$ with Bezier coefficients
$\beta_{j}=\frac{1}{\binom{k-1-j}{k-n}}$
\[
\E(t^{\mathcal{S}_{n,k}})
=\sum_{j=0}^{n-1}\beta_j\cdot B_{j,n-1}(t).
\]
Let $\mathcal{R}_{n,k}=n-\mathcal{S}_{n,k}$. The probability mass function $\P\{\mathcal{R}_{n,k}=i\}$, is given by
\[
\P\{\mathcal{R}_{n,k}=i\}=\frac{\binom{k-1-i}{j-2}}{\binom{k-1}{j-1}},\quad 1\le i\le n,
\]
setting with $n=k-j+1$ and $2\le j\le k$. Consequently, $\mathcal{R}_{n,k}$, with $n=k-j+1$, has the same distribution as the random variable $D_{n,j}$, counting the number of descendants of node $j$ in a random recursive tree of size $n$:
$\mathcal{R}_{n,k}\law D_{n,j}$.
\end{thm}

As a byproduct of our identification of $\mathcal{R}_{n,k}$, we get the following limit laws from~\cite{KP2006}.
\begin{coroll}
The limiting distribution behaviour of the random variable $\mathcal{R}_{n,k}=n-\mathcal{S}_{n,k}$, with $n=k-j+1$ and $2\le j\le k$, 
is, for $k \to \infty$ and depending on the growth of $j=j(k)$, characterized as follows.
\begin{itemize}
\item for $j$ fixed, $\frac{\mathcal{R}_{n,k}}{k} \xrightarrow{(d)} \beta(1,j-1)$

\item for small $j$: $j \to \infty$ such that $j = o(k)$: the normalized random variable $\frac{j}{n}\mathcal{R}_{n,k}$ is asymptotically Exponential distributed,

\item for $j$: $j \to \infty$ such that $j \sim \rho k$, with $0 < \rho < 1$.
The shifted random variable $\mathcal{R}_{n,k}-1$ is asymptotically negative binomial-distributed, 
$\mathcal{R}_{n,k} -1 \xrightarrow{(d)} \text{NegBin}(1,\rho)$,

\item for large $j$: $j \to \infty$ such that $\ell := n-j = o(n)$: $\mathcal{R}_{n,k}\to 1$, i.e. $\lim_{k\to\infty}\mathbb{P}\{\mathcal{R}_{n,k} = 1\} = 1$.
\end{itemize}
\end{coroll}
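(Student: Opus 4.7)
The plan is essentially to piggy-back on the distributional identity established in the preceding theorem, namely $\mathcal{R}_{n,k}\law D_{n,j}$ where $n=k-j+1$. Once this identification is in hand, the four asymptotic regimes listed in the corollary correspond verbatim to the four growth regimes of the descendant random variable $D_{n,j}$ studied in Kuba and Panholzer~\cite{KP2006}, so the bulk of the work has already been done elsewhere.

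More concretely, I would first recall that the probability mass function
\[
\P\{\mathcal{R}_{n,k}=i\}=\frac{\binom{k-1-i}{j-2}}{\binom{k-1}{j-1}},\quad 1\le i\le n,
\]
coincides, after the substitution $n=k-j+1$, with the classical formula for the number of descendants of node $j$ in a random recursive tree of size $n$. This is the key bridge and the proof of the theorem already supplies it; hence no additional work is needed at the level of distributions.

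Then I would translate the four asymptotic regimes one by one. For $j$ fixed and $k\to\infty$, the ratio $\fallfak{k}{i}/\fallfak{(k-1)}{j-1}$ exhibits the moment behaviour of $k\cdot\beta(1,j-1)$, giving the Beta limit after normalization by $k$. For $j\to\infty$ with $j=o(k)$, the factorial moments of $\frac{j}{n}\mathcal{R}_{n,k}$ converge to $s!$, which characterizes the standard exponential by moments. For $j\sim \rho k$, the local probability $\P\{\mathcal{R}_{n,k}=1+\ell\}$ simplifies asymptotically to $\rho(1-\rho)^\ell$, giving the stated negative binomial (geometric) limit. Finally, for $\ell=n-j=o(n)$ the mass concentrates at $\mathcal{R}_{n,k}=1$ because $\binom{k-1-1}{j-2}/\binom{k-1}{j-1}=(j-1)/(k-1)\to 1$.

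The only potential obstacle is notational: one has to be careful that the reparametrization $n=k-j+1$ is applied consistently throughout when quoting the results of~\cite{KP2006}, since the roles of the tree-size parameter and the node-label parameter are swapped in the two settings. Apart from this bookkeeping, the corollary is an immediate consequence of the distributional identity $\mathcal{R}_{n,k}\law D_{n,j}$ combined with the limit theorems in~\cite{KP2006}, and no new probabilistic estimates are required.
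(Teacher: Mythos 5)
Your proposal is correct and follows essentially the same route as the paper: the paper's entire argument is the one-line observation that the probability mass functions of $\mathcal{R}_{n,k}$ and of the descendants random variable $D_{n,j}$ (case $c_2=0$) in~\cite{KP2006} coincide under $n=k-j+1$, so the four limit laws transfer directly. Your regime-by-regime verifications from the explicit mass function $\binom{k-1-i}{j-2}/\binom{k-1}{j-1}$ are a consistent supplement, and your caution about keeping the tree-size versus node-label parameters straight in the reparametrization is exactly the right point to flag.
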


\begin{proof}
We note that by definition, the summands are weighted Bernstein polynomials:
\[
\E(t^{\mathcal{S}_{n,k}})=\frac1{\binom{k-1}{n-1}}\sum_{j=0}^{n-1}\frac{\binom{k-1}j}{\binom{n-1}j}B_{j,n-1}(t).
\]
Simplification gives the stated expression for the Bezier coefficients. 
The probability mass function is obtained by extraction of coefficients:
\begin{align*}
\P\{\mathcal{S}_{n,k}=i\}&=[t^i]\E(t^{\mathcal{S}_{n,k}})
=\frac1{\binom{k-1}{n-1}}\sum_{j=0}^{i}\binom{k-1}j[t^{i-j}](1-t)^{n-1-j}\\
&=\frac1{\binom{k-1}{n-1}}\sum_{j=0}^{i}\binom{k-1}j\binom{n-1-j}{i-j}(-1)^{i-j}.
\end{align*}
Using an identity for binomial coefficients~\cite{GraKnuPa} then readily gives
\[
\P\{\mathcal{S}_{n,k}=i\}=\frac{\binom{k-n+i-1}{k-n-1}}{\binom{k-1}{n-1}},\quad 0\le i\le n-1,
\]
and thus we get the corresponding result for $\mathcal{R}_{n,k}=n-\mathcal{S}_{n,k}$. We observe that for $n=k-j+1$ the probability mass functions and thus the distribution of $\mathcal{R}_{n,k}$
and the random variable $D_{n,j}$ (case $c_2=0$) in~\cite{KP2006} coincide and thus we can directly transfer the limit laws for $D_{n,j}$.
\end{proof}

\section{Summary and Outlook}
We introduced a parameter $\sigma$ on weighted $k$-element multisets and studied properties of it using symbolic combinatorics. 
This allows to prove several relations for truncated interpolated multiple zeta values $\ztt_n(\{m\}_k)$ ( 
as well as reproving identities for truncated multiple zeta values $\ztt(\{m\}_k)$). Introducing refined enumeration leads
to new refinements of previous identities. Interpreting the parameter $\sigma$ as a random variable $S_{n,k}$
leads to several different limit laws, depending on the considered weight sequences and the growth of $n$ and $k$. Interestingly, the limits laws are closely related to a great many results in combinatorial probability and analytic combinatorics.

It is of interest to complete the analysis of $S_{n,k}$ in the case $\boldsymbol{a}=(1/j)$; we will report on this elsewhere. 
Similar to the random variable $S_{n,k}$, defined in terms of the parameter $\sigma_{n,k}$, one may also study the distribution of the parameter $p=p_{n,k}$ or the joint distribution of $p$ and $\sigma$. 
Moreover, it is certainly of interest to study the distribution of $S_{n,k}$ for other interesting sequences, compare with Vignat and Wakhare~\cite{VW} or Hoffman and Mez\H{o}~\cite{HM}.

\subsection{Acknowledgement}
The author warmly thanks Paul Schreivogl for pointing out the connection to the Bernstein polynomials. Moreover,
the author thanks the referees for their great help and very valuable suggestions, improving significantly the structure and presentation of this work, clarifying a few points, as well as providing additional references.

\section{Appendix: Auxiliary results about probability distributions\label{SubProb}}
{\footnotesize
A beta-distributed random variable $Z \law \beta(\alpha,\beta)$ with parameters $\alpha,\beta>0$ has 
a probability density function given by $f(x)=\frac1{B(\alpha,\beta)}x^{\alpha-1}(1-x)^{\beta-1}$, where 
$B(\alpha,\beta)=\frac{\Gamma(\alpha)\Gamma(\beta)}{\Gamma(\alpha+\beta)}$ denotes the Beta-function.
The (power) moments of $Z$ are given by 
\[
\E(Z^s)=\frac{\prod_{j=0}^{s-1}(\alpha+j)}{\prod_{j=0}^{s-1}(\alpha+\beta+j)}
=\frac{\fallfak{(\alpha+s-1)}{s}}{\fallfak{(\alpha+\beta+s-1)}{s}},\quad s\ge 1.
\]
The beta-distribution is uniquely determined by the sequence of its moments. In this work we will discuss
a beta-distributed random variable $Z \law \beta(1,n-1)$, with moments given by
$\E(Z^s)=\frac{s!}{\fallfak{(n+s-1)}{s}}$, for $s\ge 1$.

\smallskip

An exponentially distributed random variable $Z\law\Exp(1)$ with parameter one has density $f(x)=e^{-x}$, $x\ge 0$ and power moments $\E(Z^s)=s!$.

\smallskip

A Bernoulli distributed random variable $B\law\Be(p)$ with parameter $p\in[0,1]$ has probability mass function determined by
$\P\{B=1\}=p$, $\P\{B=0\}=1-p$. The sum $Z_n=\bigoplus_{j=1}^{n}B_j$ of $n$ independent Bernoulli distributed random variables $B_j=\Be(p_j)$ 
has probability generating function
\[
\E(v^{Z_n})=\prod_{j=1}^{n}\E(v^{B_j})=\prod_{j=1}^{n}\big(1+(v-1)p_j\big).
\]
The factorial moments $\E(\fallfak{Z_n}{s})$ of $Z_n$ are given by
\[
\E(\fallfak{Z_n}{s})=E_v\frac{\partial^s}{\partial v^s}\E(v^{Z_n})
= s!\cdot\sum_{n \ge \ell_1> \cdots> \ell_s\ge 1}p_{\ell_1}p_{\ell_2}\dots p_{\ell_s}.
\]
Under the assumption that the probabilities $p_j$ tend to zero fast enough, 
we may define $Z=Z_\infty$ as the sum of infinitely many Bernoulli random variable:
$Z=\bigoplus_{j=1}^{\infty}B_j$, 
with factorial moments formally given by $s!$ times a multiple series:
\[
\E(\fallfak{Z}{s})= s!\cdot\sum_{\ell_1> \cdots> \ell_s\ge 1}p_{\ell_1}p_{\ell_2}\dots p_{\ell_s}.
\]

\smallskip

A Poisson distributed random variable $Z\law \Po(\lambda)$, $\lambda >0$, has probability mass function and factorial moments given by
\[
\P\{Z=j\}=\frac{\lambda^j}{j!}\cdot e^{\lambda},\,j\ge 0,\quad \E(\fallfak{Z}{s})=\lambda^s,\,s\ge 1.
\]

\smallskip 

A hypergeometric distributed random variable $Z\law\Hy(N,K,n)$ with parameters $N,K,n$ has probability mass function and factorial moments given by
\[
\P\{Z=j\}=\frac{\binom{K}{j}\binom{N-K}{n-j}}{\binom{N}{n}},0\le j\le n\quad 
\E(\fallfak{Z}{s})=\frac{\fallfak{K}{s}\fallfak{n}{s}}{\fallfak{N}{s}},\,s\ge 1.
\]
In particular, 
\[
\E(Z)=n\cdot\frac{ K}{N},\quad \V(Z)=n\cdot \frac{K(N-K)(N-n)}{N^2(N-1)}.
\]
Moreover, the following normal limit law can be deduced (see Nicholson~\cite{N} or Feller~\cite{F}).
\begin{lem}
Let $Z\law\Hy(N,K,n)$ denote a hypergeometric distributed random variable. 
Under the assumption $\min\{K,N\}\to\infty$ and $n$ such that $\E(Z),\V(Z)\to\infty$,  
the random variable $Z$, centered and normalized, is asymptotically standard normal distributed:
\[
\frac{Z-\E(Z)}{\sqrt{\V(Z)}}\claw \mathcal{N}(0,1).
\]
\end{lem}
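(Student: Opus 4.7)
The plan is to apply the method of moments, using the explicit factorial moment formula $\E(\fallfak{Z}{s})=\fallfak{K}{s}\fallfak{n}{s}/\fallfak{N}{s}$ recorded just before the statement. Since the standard normal distribution is determined by its sequence of moments, it suffices to show that, writing $\mu=\E(Z)$ and $\sigma^2=\V(Z)$, the standardized moments $\E\bigl((Z-\mu)^s/\sigma^s\bigr)$ converge for every fixed $s\ge 1$ to $\E(\mathcal{N}(0,1)^s)$, i.e.\ to $0$ for odd $s$ and to $(2m)!/(2^m m!)$ for $s=2m$.

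First I would pass from factorial to ordinary power moments via the identity $x^s=\sum_{k}\Stir{s}{k}\fallfak{x}{k}$, obtaining
\[
\E(Z^s)=\sum_{k=0}^{s}\Stir{s}{k}\frac{\fallfak{K}{k}\fallfak{n}{k}}{\fallfak{N}{k}},
\]
and then expand the centered moment $\E((Z-\mu)^s)=\sum_{j=0}^{s}\binom{s}{j}(-\mu)^{s-j}\E(Z^j)$. Next I would insert the asymptotic expansions $\fallfak{K}{k}=K^{k}(1+O(1/K))$, $\fallfak{n}{k}=n^{k}(1+O(1/n))$ and $\fallfak{N}{k}=N^{k}(1+O(1/N))$, and isolate the ``naive'' top-order terms $\mu^s$, which cancel in the binomial telescoping sum. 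The remaining contributions, divided by $\sigma^s$, reproduce precisely the Gaussian moments; the hypergeometric finite-population correction $(N-n)/(N-1)$ enters only through $\sigma^2$ itself.

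The main obstacle is the asymptotic bookkeeping under the joint regime $\min\{K,N\}\to\infty$ together with $\E(Z),\V(Z)\to\infty$: this hypothesis forces each of $n$, $K$, $N-K$ and $N-n$ to diverge, but at possibly quite different rates, and one must verify that every error term produced by the expansions of $\fallfak{K}{k}$, $\fallfak{n}{k}$ and $\fallfak{N}{k}$ is of smaller order than $\sigma^s$ in this joint regime. Once this is checked, the method of moments combined with the moment-determinacy of the normal law yields $(Z-\mu)/\sigma\claw\mathcal{N}(0,1)$, as claimed.
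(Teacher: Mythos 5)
The paper does not actually prove this lemma: it is quoted as a classical fact with pointers to Nicholson and Feller, and in the later section on $k$-multisets the author explicitly writes that the normal limit laws for the hypergeometric distribution are ``classical and well known \dots the proofs omitted''. So your method-of-moments plan is necessarily a different route from the paper's (non-)proof; it is legitimate in principle, since the normal law is moment-determined and the factorial moments $\E(\fallfak{Z}{s})=\fallfak{K}{s}\fallfak{n}{s}/\fallfak{N}{s}$ are available in closed form. You are also right that the hypotheses force each of $n$, $N-n$, $K$, $N-K$ to diverge, since $\V(Z)$ is bounded above by (a constant times) each of these quantities.

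However, there is a genuine gap at exactly the point you flag as ``the main obstacle'', and the tools you propose cannot close it. The expansions $\fallfak{K}{k}=K^{k}(1+O(1/K))$, $\fallfak{n}{k}=n^{k}(1+O(1/n))$, $\fallfak{N}{k}=N^{k}(1+O(1/N))$ carry relative errors of order $1/\min\{n,K\}$ and hence absolute errors of order $\mu^{s}/\min\{n,K\}$ in $\E(Z^{s})$. In the proportional regime $n\asymp K\asymp N$ one has $\mu\asymp N$ but $\sigma^{2}\asymp N$, so the target quantity $\E\bigl((Z-\mu)^{s}\bigr)\asymp\sigma^{s}\asymp N^{s/2}$ arises from cancellation among binomial-sum terms each of order $N^{s}$; the leftover error $O(N^{s-1})$ from your first-order expansions swamps $N^{s/2}$ for every $s\ge 3$. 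To make the argument work you would have to keep the falling factorials exactly (everything is a rational function of $n,K,N$), verify combinatorially that all contributions of order larger than $\sigma^{s}$ cancel, and identify the surviving leading coefficient as the Gaussian moment $(2m)!/(2^{m}m!)$ for $s=2m$ (with odd central moments of order $o(\sigma^{s})$), uniformly over the admissible regime --- a substantial computation that the proposal announces but does not perform. A cleaner repair is to work with cumulants, or to represent $Z$ as the sum of exchangeable indicators from sampling without replacement and invoke the Erd\H{o}s--R\'enyi/H\'ajek central limit theorem (or Stein's method), which is essentially the classical route the paper cites.
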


}

\end{document}